\DeclareMathAlphabet{\mathpzc}{OT1}{pzc}{m}{it}
\newcommand\la{\leftarrow}
\newcommand\id{\mathrm{id}}
\newcommand\ten{\otimes}
\newcommand\eps{\epsilon}
\newcommand\DD{\mathrm{D}}
\renewcommand\H{\mathrm{H}}
\newcommand\z{\mathrm{Z}}
\renewcommand\b{\mathrm{B}}
\newcommand\Q{\mathbb{Q}}
\newcommand\C{\mathcal{C}}
\newcommand\cL{\mathcal{L}}
\newcommand\cP{\mathcal{P}}
\newcommand\cS{\mathcal{S}}
\newcommand\cU{\mathcal{U}}
\newcommand\cW{\mathcal{W}}
\newcommand\Del{\mathfrak{Del}}
\newcommand\DDel{\uline{\Del}}
\newcommand\m{\mathfrak{m}}
\newcommand\Ho{\mathrm{Ho}}
\newcommand\Alg{\mathrm{Alg}}
\newcommand\Hom{\mathrm{Hom}}
\newcommand\map{\mathrm{map}}
\newcommand\HHom{\underline{\mathrm{Hom}}}
\newcommand\EEnd{\underline{\mathrm{End}}}
\newcommand\DDer{\underline{\mathrm{Der}}}
\newcommand\coker{\mathrm{coker\,}}
\newcommand\Co{\mathrm{Co}}
\newcommand\nilp{\mathrm{nilp}}
\newcommand\Art{\mathrm{Art}}
\newcommand\Set{\mathrm{Set}}
\newcommand\Com{\mathrm{Com}}
\newcommand\Lim{\varprojlim}
\newcommand\LLim{\varinjlim}
\newcommand\into{\hookrightarrow}
\newcommand\onto{\twoheadrightarrow}
\newcommand\xra{\xrightarrow}
\newcommand\xla{\xleftarrow}
\newcommand\bt{\bullet}
\newcommand\by{\times}
\newcommand\mc{\mathrm{MC}}
\newcommand\mmc{\underline{\mathrm{MC}}}
\newcommand\Gg{\mathrm{Gg}}
\newcommand\ddef{\mathrm{Def}}
\newcommand\Symm{\mathrm{Symm}}
\newcommand\Sym{\mathrm{Sym}}
\newcommand\Tot{\mathrm{Tot}\,}
\newcommand\diag{\mathrm{diag}\,}
\newcommand\pro{\mathrm{pro}}
\newcommand\half{\frac{1}{2}}
\newcommand\gr{\mathrm{gr}}
\newcommand\Gpd{\mathrm{Gpd}}
\newcommand\Lie{\mathrm{Lie}}
\newcommand\op{\mathrm{opp}}
\newcommand\co{\colon\thinspace}
\newcommand\oR{\mathbf{R}}
\newcommand\oL{\mathbf{L}}
\newcommand\uleft\underleftarrow
\newcommand\uline\underline
\newcommand\uright\underrightarrow
\newcommand{\tps}{\texorpdfstring}
\newtheorem{theorem}{Theorem}[section]
\newtheorem{proposition}[theorem]{Proposition}
\newtheorem{corollary}[theorem]{Corollary}
\newtheorem{lemma}[theorem]{Lemma}
\newtheorem*{theorem*}{Theorem}
\newtheorem*{proposition*}{Proposition}
\newtheorem*{corollary*}{Corollary}
\newtheorem*{lemma*}{Lemma}
\newtheorem*{conjecture*}{Conjecture}
\theoremstyle{definition}
\newtheorem{definition}[theorem]{Definition}
\newtheorem*{definition*}{Definition}
\newtheorem*{notation*}{Notation}
\theoremstyle{remark}
\newtheorem{example}[theorem]{Example}
\newtheorem{remark}[theorem]{Remark}
\newtheorem{remarks}[theorem]{Remarks}
\newtheorem*{example*}{Example}
\newtheorem*{examples*}{Examples}
\newtheorem*{remark*}{Remark}
\newtheorem*{remarks*}{Remarks}
\newtheorem*{exercise*}{Exercise}
\newtheorem*{property*}{Property}
\newtheorem*{properties*}{Properties}
\newcommand\lex{\mathrm{lex}}
\newcommand\DGLA{\mathrm{DGLA}}
\begin{document}


%
%
%

\begin{abstract}
 We summarise the chain of comparisons \cite{ddt1} showing Hinich's  derived Maurer--Cartan functor gives an equivalence between differential graded Lie algebras and derived Schlessinger functors on  Artinian differential graded-commutative algebras. We include some motivating deformation problems and analogues for more general Koszul dual pairs of operads. 
\end{abstract}

 \title{Derived deformation functors, Koszul duality, and Maurer-Cartan spaces}
 \author{J.P.Pridham}
\maketitle

\tableofcontents


\section{Derived deformation functors}

Where classical deformation theory concerns functors on local Artinian rings, derived deformation theory looks at functors on enhancements such as  differential graded or simplicial rings. We will focus on  the former, which only apply in characteristic $0$. Fix a field $k$ of characteristic $0$.

\subsection{Artinian cdgas and DGLAs} 

\begin{definition}\label{cdgadef}
A cdga (commutative differential graded algebra) $A_{\bt}$ over $k$ is a chain complex of $k$-vector spaces equipped with a   unital associative graded-commutative  multiplication with respect to which the differential acts as a derivation. Say that $A_{\bt}$ is local Artinian if it admits a $k$-cdga homomorphism $A_{\bt} \to k$ for which the kernel $\m(A_{\bt})$ is nilpotent and finite-dimensional. 

Write $dg\Art_k$ for the category of local Artinian $k$-cdgas, $dg_+\Art_k \subset dg\Art$ for the full subcategory of non-negatively graded objects  $\ldots A_2 \to A_1 \to A_0$, and $\Art_k \subset dg_+\Art_k$ for the full subcategory on objects concentrated in degree $0$.
\end{definition}

\begin{definition}
 A differential graded Lie algebra (DGLA) $L^{\bt}$ over $k$ is a cochain complex  of $k$-vector spaces equipped with a graded-Lie bracket with respect to which the differential acts as a derivation.
\end{definition}



\begin{remark}\label{cfluriermk}
 As is traditional in derived deformation theory \cite{hinstack,Man2,ddt1}, the nilpotence condition in Definition \ref{cdgadef} is strict: there exists $n>0$ such that any product of $n$ elements in $\m(A_{\bt})$ is $0$. The formulation of \cite[\S 6.2]{lurie} and \cite{lurieDAG10} instead considers cdgas which are only homologically Artinian, but they break the Maurer--Cartan functor which will be our main focus, although their homotopy theory is equivalent
by \cite[Proposition 2.7]{drep} or \cite[Corollary 4.4.4]{boothDefThPt}.\footnote{
Specifically,  the Postnikov tower ensures  that homologically Artinian cdgas lie in the closure of $dg_+\Art_k$ under homotopy pullbacks, so \cite[Corollary 4.4.4]{boothDefThPt} implies that derived completion gives a fully faithful $\infty$-functor 
from them to the simplicial localisation $\oL^{\cW}\pro(dg_+\Art_k)$ 
of 
the pro-category 
at quasi-isomorphisms $\cW$. \\
If, for $A,B \in dg_+\Art_k$, we have a span $A \xla{p} A' \xra{q} B$ in $\pro(dg_+\Art_k)$ with $p$ a trivial fibration and $q$ a fibration, then \cite[Lemma 4.5]{ddt1}\footnotemark allows us to write $p$ as a filtered limit of a system of acyclic small extensions in $dg_+\Art$. Since $q$ must factor through some term $A''$ in the system,  we can replace $A'$ with its quotient $A'' \in dg_+\Art_k$ with the same properties. Thus  the essential image of $dg_+\Art_k \to \Ho(\pro(dg_+\Art_k))$ is closed under homotopy pullbacks. \\
%
Since  $\cW$-local equivalences (in the sense of left Bousfield localisation) are preserved under filtered colimits, that lemma also implies that the  Yoneda functor $h \co \pro(dg_+\Art_k)^{\op} \into (\Set)^{dg_+\Art_k} \into (s\Set)^{dg_+\Art_k}$ sends quasi-isomorphisms to $\cW$-local equivalences. If $\tilde{A}^{\bt}$ is a cofibrant cosimplicial frame  in $\pro(dg_+\Art_k)$ for $A \in dg_+\Art_k$, the map $h_A \to h_{\tilde{A},\bt}$ is thus a  $\cW$-local equivalence, and $h_{\tilde{A},\bt}$ is $\cW$-local. Combining \cite[\S 5.4]{hovey} with the proof of \cite[Theorem 2.2]{DKEquivsHtpyDiagrams}, it follows that $\oL^{\cW}dg_+\Art_k \to \oL^{\cW}\pro(dg_+\Art_k)$ is also a fully faithful $\infty$-functor.}
\footnotetext{Rather, we  partially generalise  to $\pro(dg_+\Art_k)$. Take a minimal dg ideal $I \le A'$ for which $A'/I \to A$ has the desired property; this must exist by pro-Artinianity. Then $\H_*I=0$ and any acyclic subcomplex of $I$ containing $\m(A) I$ would contradict minimality,  
so $(I/\m(A)I)_n\cong \H_n(I/\m(A)I) \cong \H_{n-1}(\m(A)I)$. Thus $I_{\le n-1}=0$ implies $I_{\le n}=0$, so $I=0$.}
\end{remark}

\subsection{Extended functors associated to DGLAs (after \tps{\cite{Man2, Kon} \ldots}{[Manetti,Kontsevich, ...]})}

\begin{definition}\label{MCdef}
 Given a DGLA $L$, the Maurer--Cartan set $\mc(L)$ is defined by 
 \[
  \mc(L):=\{\omega \in L^1 ~:~ d\omega +\half [\omega,\omega]=0\}.
 \]

 If $L^0$ is nilpotent, define the gauge group $\Gg(L)$ to consist of grouplike elements in the completed universal enveloping algebra $\hat{\cU}(L^0)$ (a complete Hopf algebra). The exponential map gives an isomorphism to $\Gg(L)$ from the set $L^0$ equipped with the Campbell--Baker--Hausdorff product.
 
 There is a gauge action of $\Gg(L)$ on $\mc(L)$, given by $g\star \omega:= g\omega g^{-1} -(dg)g^{-1}$ (evaluated in $\hat{\cU}(L)^1$); see \cite[\S 1]{Man} or \cite[Lecture 3]{Kon}.

 Denote the quotient set $\mc(L)/\Gg(L)$ by $\ddef(L)$, and the quotient groupoid $[\mc(L)/\Gg(L)]$ by $\Del(L)$ (the Deligne groupoid).
 \end{definition}
The terminology has its origins in constructions associated to the DGLA of differential forms valued in an adjoint bundle, where the Maurer--Cartan equation parametrises flat connections and the gauge action corresponds to gauge transformations.

This simple lemma is key to the role these functors play in deformation theory:
\begin{lemma}\label{obsdglalemma}
Given a  central extension $I \to L \to M$   of DGLAs, we have short exact sequences
\begin{align*}
 \z^1(I) \to \mc(L) \to \mc(M) \to \H^2(I)\\
 I^0 \to \Gg(L) \to \Gg(M) \to 0 \\
 \H^1(I) \to \ddef(L) \to \ddef(M) \to \H^2(I)
 \end{align*}
of groups and sets (where $L^0$ is assumed nilpotent for the last two sequences).
 \end{lemma}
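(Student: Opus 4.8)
The plan is to read the three lines as exact sequences of pointed sets in the usual sense (image equals fibre over the basepoint, and an ``action with orbits equal to fibres'' at each spot where a group or set acts on the previous term), and to exploit centrality of $I$ throughout: since $[I,L]=0$ we have $[I,I]=0$, so $I$ is abelian and $\mc(I)=\{\eta\in I^1 : d\eta=0\}=\z^1(I)$, while the Campbell--Baker--Hausdorff product on the central subspace $I^0\le L^0$ reduces to addition. I would dispatch the group sequence first, as it is the most direct: the exponential identifies $\Gg(L)\cong(L^0,\mathrm{CBH})$ and similarly for $M$, the map $\Gg(L)\to\Gg(M)$ is induced by the surjective DGLA map $L^0\onto M^0$ (hence surjective), and its kernel is $\exp(I^0)$, which by centrality is $(I^0,+)$. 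This gives $I^0\into\Gg(L)\to\Gg(M)\to 0$ exact, with $I^0$ central in $\Gg(L)$.

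For the Maurer--Cartan sequence, I would first check that $\z^1(I)$ acts on $\mc(L)$ by translation: for $\omega\in\mc(L)$ and $\eta\in\z^1(I)$, the centrality of $\eta$ kills $[\omega,\eta]$ and $[\eta,\eta]$, so $d(\omega+\eta)+\half[\omega+\eta,\omega+\eta]=d\omega+\half[\omega,\omega]=0$. The same computation in reverse shows that if $\omega,\omega'\in\mc(L)$ have the same image in $\mc(M)$, then $\eta:=\omega-\omega'\in I^1$ satisfies $d\eta=0$, so the fibres of $\mc(L)\to\mc(M)$ are exactly the $\z^1(I)$-orbits. Next I would define the obstruction map $\mc(M)\to\H^2(I)$: lift $\bar\omega$ to $\omega\in L^1$ and set $\theta:=d\omega+\half[\omega,\omega]$, which lands in $I^2$ because $\bar\omega$ solves Maurer--Cartan. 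A short computation, using $d[\omega,\omega]=2[d\omega,\omega]$, the graded Jacobi identity $[[\omega,\omega],\omega]=0$, and centrality $[\theta,\omega]=0$, shows $d\theta=0$; replacing $\omega$ by $\omega+\eta$ ($\eta\in I^1$) changes $\theta$ by $d\eta$, so $[\theta]\in\H^2(I)$ is independent of the lift. Then $[\theta]=0$ precisely when some lift solves Maurer--Cartan in $L$, giving exactness at $\mc(M)$.

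For the Deligne sequence I would pass to gauge-quotients. The crucial point is the gauge-covariance of the curvature $F(\omega):=d\omega+\half[\omega,\omega]$, namely $F(g\star\omega)=\Ad(g)F(\omega)$ computed in $\hat\cU(L)$; since $F$ of a lift lands in the central ideal $I$, we get $\Ad(g)\theta=\theta$, so the obstruction class is gauge-invariant and descends to $\ddef(M)\to\H^2(I)$, with exactness at $\ddef(M)$ inherited from the Maurer--Cartan sequence. To identify the left-hand action, I would note that $\z^1(I)$ commutes with the gauge action (again by centrality, $g\star(\omega+\eta)=(g\star\omega)+\eta$ for $\eta\in\z^1(I)$), and that gauging by $\exp(\xi)$ for $\xi\in I^0$ translates $\omega$ by a coboundary $\pm d\xi$; hence the induced $\z^1(I)$-action on $\ddef(L)$ kills $\b^1(I)$ and factors through $\H^1(I)$. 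Finally, if $[\omega],[\omega']\in\ddef(L)$ agree in $\ddef(M)$, lifting the gauge $\bar g$ realising $\bar\omega'=\bar g\star\bar\omega$ to $g\in\Gg(L)$ and applying the Maurer--Cartan sequence to $g\star\omega$ and $\omega'$ writes $\omega'=g\star(\omega+\eta)$ with $\eta\in\z^1(I)$, so $[\omega']=\eta\cdot[\omega]$; this shows the fibres of $\ddef(L)\to\ddef(M)$ are the $\H^1(I)$-orbits.

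I expect the main obstacle to be the covariance identity $F(g\star\omega)=\Ad(g)F(\omega)$ and the associated bookkeeping that the $\z^1(I)$-translation and gauge actions commute and that coboundaries act through gauge transformations; once these are in hand, every exactness assertion reduces to the centrality computations above. The remaining verifications (that $\theta$ is a cocycle, that the obstruction is lift-independent, and the diagram-chase identifying orbits with fibres) are then routine, so I would state the covariance identity as the key lemma and treat the rest as formal consequences of centrality.
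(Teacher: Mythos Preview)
Your argument is correct and follows essentially the same route as the paper: lift, form the curvature $\kappa(\tilde\omega)=d\tilde\omega+\half[\tilde\omega,\tilde\omega]\in I^2$, show it is closed via $d\kappa=[\kappa,\tilde\omega]=0$ by centrality, and observe that different lifts change it by a coboundary, while the $\Gg$ sequence is immediate and the $\ddef$ sequence follows by passing to quotients. Your write-up is in fact more explicit than the paper's for the third sequence---the paper dispatches it in one line (``follows by passing to quotients''), whereas you spell out the gauge-covariance $F(g\star\omega)=\Ad(g)F(\omega)$, the commutation of the $\z^1(I)$-translation with the gauge action, and the fact that gauging by $\exp(\xi)$ for $\xi\in I^0$ translates by $-d\xi$; these are exactly the checks needed to justify that one line.
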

\begin{proof}
 Given $\omega \in \mc(M)$, surjectivity of $L^1 \to M^1$ allows us to choose a lift $\tilde{\omega} \in L^1$, and then $\kappa(\tilde{\omega}):=d\tilde{\omega}+ \half [\tilde{\omega},\tilde{\omega}]$ lies in $\ker(L^2 \to M^2)=I^2$. Moreover, since $[\tilde{\omega},[\tilde{\omega},\tilde{\omega}]]=0$ and $d^2=0$,  we have
$
  d \kappa(\tilde{\omega})= [d\tilde{\omega},\tilde{\omega}] = [\kappa(\tilde{\omega}), \tilde{\omega}],
$
which vanishes because $I$ is central. Hence $\kappa(\tilde{\omega}) \in \z^2I$. Any other lift for $\omega$ takes the form $\tilde{\omega}+x$ for $x \in I^1$, with $\kappa(\tilde{\omega}+x)= \kappa(\tilde{\omega})+dx$ (centrality again). Thus  the class $[\kappa(\tilde{\omega})] \in \H^2I$ is the potential obstruction to lifting $\omega$ to $\mc(L)$, and the set of lifts is a torsor for $\z^1I$ under addition.
The exact sequence for $\Gg$ is immediate, and that for $\ddef$ then follows by passing to quotients.
 \end{proof}

Given a $k$-DGLA $L$ and $A \in dg\Art_k$, we have a nilpotent DGLA $\Tot(L\ten_k \m(A))$ given by 
\[
 \Tot(L\ten \m(A))^n= \bigoplus_i L^{n+i}\ten_k\m(A)_i 
\]
(finite sum because $A$ is Artinian), with bracket
$
 [u\ten a,v\ten b]= (-1)^{\deg v \deg a} [u,v]\ten ab
$
and differential
$
 d(u\ten a)= (du)\ten a +(-1)^{\deg u} u\ten da.
$

\begin{definition}
 Given a DGLA $L$, define set-valued functors $\mc(L,-)$ and $\ddef(L,-)$ and the groupoid-valued functor $\Del(L,-)$  on $dg\Art_k$ by evaluating the functors $\mc$, $\ddef$ and $\Del$ respectively on the nilpotent DGLA
 $\Tot(L\ten_k \m(A))$.
\end{definition}

%
%
%
%
\begin{definition}\label{epsndef}
 Define $k[\eps_n]\in  dg\Art_k$ to be $k \oplus k.\eps_n$, with $\deg \eps_n = n$ and $\eps_n\cdot \eps_n=0$. 
\end{definition}

\begin{example}\label{ddefcohoex}
We have isomorphisms 
 $\mc(L, k[\eps_n]) \cong \z^{n+1}(L)\eps_n$ and  $\Gg(L, k[\eps_n]) \cong L^n \eps_n $, with  
 \[
 \ddef(L, k[\eps_n]) \cong \coker(L^n \eps_n \xra{d} \z^{n+1}(L)\eps_n) = \H^{n+1}(L). 
\]

In particular, the functor $\ddef(L,-)$ on  $dg\Art_k$ detects all cohomology groups of $L$.
 \end{example}

\subsection{What do these classify?}


\begin{example} \label{defchainex}
 For $V_{\bt}$ a chain complex of $k$-vector spaces, we have a DGLA $L:= \EEnd(V)$, where $L^n= \prod_i \Hom_k(V_{i+n},V_i)$, with bracket $[f,g]= f \circ g - (-1)^{\deg f \deg g} g \circ f$ and differential $d(f):= [d,f]$.
 
 Extending $A$-linearly, elements of $(\Tot(L\ten \m(A)))^1$ can be identified with $A$-linear morphisms  $\Tot(A \ten V) \to \Tot(\m(A) \ten V)$ of homological degree $-1$. Such an element satisfies the Maurer--Cartan equation if and only if  the corresponding map $\omega \co  A\ten V \to  A\ten V[1]$ satisfies $(d + \omega) \circ (d + \omega)=0$.
 Meanwhile, 
\[
 \Gg(L,A) \cong \{g \in \HHom_A(A\ten V, A\ten V)_0 ~:~ g \equiv \id \mod \m(A)\},
\]
with the gauge action on $\mc(L,A)$ corresponding to conjugation.
 
Since flat modules over Artinian rings are free, one way to interpret $\ddef(L,A)$ is thus that it parametrises isomorphism classes of  $A$-modules $V'$ in chain complexes which are flat as graded $A$-modules, equipped with a fixed isomorphism $V'\ten_Ak \cong V$.

Beware that this flatness condition interacts poorly with quasi-isomorphism, and in particular 
$V'\ten_Ak$ 
does not necessarily compute
$V'\ten^{\oL}_Ak$ 
unless both $A$ and $V$ are concentrated in non-negative chain degrees (see \cite[Example 4.3]{hinichDefsHtpyAlg} for a counterexample).
It is instead a derived tensor product of the second kind as in \cite[\S 3.12]{positselskiDerivedCategories}; see \cite{coddt} for analysis of such deformations.
\end{example}
%
%
%

\begin{example}\label{defalgex}
 For algebras $R$ over $k$-linear dg operads $\cP$, we can similarly consider the DGLA $L:=\DDer_{\cP}(R,R)$ of derivations of $R$ as a graded $\cP$-algebra, and then $\mc(L,A)$ parametrises deformations of the structural derivation $d$ on $R$, i.e.
  closed derivations $d'$ of homological degree $-1$ on the  $\cP$-algebra $R\ten_kA$ in graded $A$-modules with $d' \equiv d \mod \m(A)$. 
  
 The gauge group $\Gg(A)$ then consists of automorphisms $g$ of $R \ten A$ as a graded $A$-linear $\cP$-algebra with $g \equiv \id \mod \m(A)$, so $\ddef(L,A)$ parametrises isomorphism classes of deformations of $R$. As in \cite{drinfeldtoschechtman,Kon,hinichDefsHtpyAlg,KS}, if $A$, $R$ and $\cP$ are all concentrated in non-negative chain degrees and either $R$ is cofibrant\footnote{If we drop the cofibrant hypotheses, the map from $\ddef(L,A)$ to derived deformations ceases to be an equivalence. If we drop the boundedness hypotheses, we don't even have a map unless we replace $\ten^{\oL}_A$ with a derived tensor product of the second kind.} or we replace $\cP$-derivations with $\cP_{\infty}$-derivations, 
 then $\ddef(L,A)$ is equivalent to the set of all quasi-isomorphism classes of derived deformations of $R$, i.e. $A$-linear $\cP$-algebras $R'$ equipped with a fixed quasi-isomorphism $R'\ten^{\oL}_Ak \simeq R$.
\end{example}

\subsection{Why consider functors on \tps{$dg\Art$}{dgArt}?}

\begin{definition}
 A small extension in $dg\Art_k$ is a surjective morphism $A \to B$ for which the kernel $I$ satisfies $I\cdot \m(A)=0$. We say that the small extension is acyclic if moreover $\H_*I=0$.

 Note that every surjection factorises as a composite of small extensions.
 \end{definition}

Given a functor $F$ on $\Art_k$ and a small extension $A \to B$ in $\Art_k \subset dg\Art_k$, classical obstruction theory is concerned with understanding potential obstructions to lifting elements of $F(B)$ to $F(A)$. Until the advent of derived deformation theory, constructing such obstructions was something of an art, with functoriality often difficult to establish. 

\subsubsection{Extended deformation functors}

\begin{definition}[\cite{Man2}, following \cite{Sch}]\label{edf}
Say that $F: dg\Art_k \to \Set$ is  a  pre-deformation (resp. deformation) functor if:
\begin{enumerate}
\item  for all small extensions $A \onto B$ (equivalently: for all surjections) and all morphisms $C \to B$, the map
$
F(A\by_BC) \to F(A)\by_{F(B)}F(C)
$
is surjective;

\item for all $A,B$, the map
$
F(A\by_kB) \to F(A)\by F(B)
$
is an isomorphism;

\item $F(k)\cong \{\ast\}$;

\item for all acyclic small extensions $A \to B$, the morphism $F(A) \to F(B)$ is a surjection (resp. an isomorphism).
\end{enumerate}

\begin{example}
 Lemma \ref{obsdglalemma} implies that $\ddef(L,-)$
is a deformation functor for any DGLA $L$ and that $\mc(L,-)$ is a pre-deformation functor.
\end{example}
\end{definition}

\begin{definition}\label{HnFdef}
Given a deformation functor $F$, define its $n$th tangent cohomology group by $\H^n(F):= F(k[\eps_n])$; this is in fact a $k$-vector space.

In this notation, Example \ref{ddefcohoex} says that $\H^n(\ddef(L,-))\cong \H^{n+1}(L)$.
\end{definition}


\subsubsection{Obstructions}

The next argument first appeared in  \cite[proof of Theorem 3.1, step 3]{Man2}, and shows that obstruction spaces arise as higher tangent spaces.
\begin{proposition}\label{obsprop}
 For any deformation functor $F \co dg\Art_k \to \Set$ and any small extension $e \co A \to B$ with kernel $I$, we have a natural obstruction map $o_e\co F(B) \to \bigoplus_m \H^{m+1}(F)\ten \H_mI$ whose kernel is the image of $F(A) \to F(B)$.
\end{proposition}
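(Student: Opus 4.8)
The plan is to mimic, in the abstract setting, the Maurer--Cartan computation of Lemma \ref{obsdglalemma}: there the obstruction to lifting $\omega\in\mc(M)$ lived in $\H^2$ of the kernel DGLA, i.e. one cohomological degree above the deformation. So the whole game is to manufacture, from the functor axioms alone, a degree-raising operation that turns the kernel $I$ into something $F$ detects in its $(m+1)$st tangent space $\H^{m+1}(F)$. Since $I\cdot\m(A)=0$, the extension $e$ is square-zero and $I$ is simply a finite-dimensional chain complex of $k$-vector spaces.

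First I would install the degree shift via a cone. Let $C:=\mathrm{Cone}(\id_I)$, a contractible complex sitting in a short exact sequence $0\to I\to C\to I[1]\to 0$, the shift normalised so that $\H_m(I[1])=\H_{m-1}(I)$. Pushing the square-zero extension $A$ out along $I\into C$ produces a square-zero extension $A^+$ of $B$ by $C$; as $C$ is acyclic this is an acyclic small extension, so axiom (4) of Definition \ref{edf} gives an isomorphism $F(A^+)\xra{\sim}F(B)$. Dividing $A^+$ by the image of $I$ yields a square-zero extension $A^{++}$ of $B$ by $I[1]$, and the essential observation is that the canonical map $A\to A^{++}$ kills $I$ and hence factors as $A\onto B\xra{\sigma}A^{++}$; thus $\sigma$ splits $A^{++}\to B$, so $A^{++}\cong B\oplus I[1]\cong B\by_k(k\oplus I[1])$. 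Feeding this into axiom (2), and computing $F(k\oplus J)\cong\bigoplus_m\H^m(F)\ten\H_m(J)$ for a complex $J$ by combining axioms (2) and (4) with the field-splitting $J\simeq\H_*(J)\oplus(\text{contractible})$ and Example \ref{ddefcohoex}, I obtain
\[
 F(A^{++})\;\cong\;F(B)\times\Big(\bigoplus_m\H^{m+1}(F)\ten\H_m I\Big),
\]
the passage $I\rightsquigarrow I[1]$ producing exactly the $+1$ shift. I would then define $o_e$ as the composite $F(B)\xra{\sim}F(A^+)\to F(A^{++})$ followed by projection to the second factor; the first factor is the identity because everything lies over $B$. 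Naturality in $e$ is automatic, since cone, pushout and section are all functorial.

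The heart of the matter is the kernel computation, and the key input will be the identification $A\cong A^+\by_{A^{++}}B$ of extensions of $B$ by $I$: pulling the extension $A^+\to A^{++}$ (whose kernel is $I$) back along $\sigma$ recovers $A$, precisely because $\sigma$ was built from $A$. Granting this, both containments follow from axiom (1). If $\xi=(A\to B)_*\zeta$ lifts to $F(A)$, its image in $F(A^+)$ is forced (by uniqueness from $F(A^+)\cong F(B)$) to be the preferred lift, so its further image in $F(A^{++})$ is $\sigma_*\xi$, i.e. $o_e(\xi)=0$. Conversely, if $o_e(\xi)=0$ then a lift $\eta\in F(A^+)$ of $\xi$ has image $\sigma_*\xi$ in $F(A^{++})$, so $(\eta,\xi)\in F(A^+)\by_{F(A^{++})}F(B)$; surjectivity in axiom (1) for $A^+\to A^{++}$ pulled back along $\sigma$ lifts it to $F(A^+\by_{A^{++}}B)=F(A)$, whose image in $F(B)$ is $\xi$.

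The step I expect to be most delicate is exactly this identification $A\cong A^+\by_{A^{++}}B$, together with the bookkeeping confirming that the cone installs a clean $\H_m(I)\rightsquigarrow\H^{m+1}(F)$ shift with no parasitic terms; once the diagram of square-zero extensions is arranged correctly, the two containments are a formal consequence of axioms (1), (2) and (4). A secondary nuisance is justifying $F(k\oplus J)\cong\bigoplus_m\H^m(F)\ten\H_m(J)$ when $J$ carries a nonzero differential, which is where the acyclic-extension axiom (4) and the splitting of complexes over the field $k$ are needed.
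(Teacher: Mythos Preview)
Your proof is correct and follows essentially the same route as the paper: your pushout $A^+$ is precisely the paper's cone $\tilde{B}$ of $I\hookrightarrow A$, your fibre product $A\cong A^+\times_{A^{++}}B$ is the paper's $A=\tilde{B}\times_{k\oplus I[1]}k$ after applying axiom (2) to split $F(A^{++})\cong F(B)\times F(k\oplus I[1])$, and the kernel argument is identical. The only differences are cosmetic --- you pass through the intermediate $A^{++}=B\oplus I[1]$ where the paper maps directly to $k\oplus I[1]$, and you spell out both containments and the computation of $F(k\oplus J)$ more explicitly than the paper does.
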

\begin{proof}
Let $\tilde{B}$ be the cone of $e \co I \to A$, regarded as a cdga in the obvious way with  $I\cdot I=0$. Then we have an acyclic small extension $\phi \co \tilde{B} \to B$ and a natural surjection $\rho \co \tilde{B} \onto k \oplus I[1]$ (with $I[1]$ square-zero) such that $A= \tilde{B}\by_{\rho,k \oplus I[1]}k$. 

Since $F$ is a deformation functor, we can then define $o_e$ to be the composite 
\[
 F(B) \xla[\sim]{F(\phi)} F(\tilde{B}) \xra{F(\rho)} F(k \oplus I[1]) \cong  \bigoplus_m \H^{m+1}(F)\ten \H_mI.
\]

Surjectivity of the map 
$F(A) \to F(\tilde{B})\by_{F(k \oplus I[1])}F(k)$ completes the proof. 
\end{proof}

Consequently, a morphism $F \to G$ of deformation functors is an isomorphism if and only if it induces an isomorphism  $\H^*(F) \cong \H^*(G)$ \cite[Corollary 3.3]{Man2}, which in particular implies that quasi-isomorphisms of DGLAs induce isomorphisms of deformation functors. By \cite[Theorem 2.8]{Man2}, every pre-deformation functor $F$ has a universal deformation functor $F^+$ under it, and $\mc(L,-)^+ \cong \ddef(L,-)$  by  \cite[Corollary 3.4]{Man2}.

\section{Koszul duality}\label{koszulsn}


%
%
%

\subsection{Pro-Artinian cdgas, dg coalgebras and the bar/cobar construction}

As in \cite{descent}\footnote{The theory of pro-categories was developed here specifically for its applications to deformation theory.} the pro-category $\pro(dg\Art_k)$ consists of filtered inverse systems $\{A_{\alpha}\}_{\alpha}$ in $dg\Art_k$, with homomorphisms $\Hom(\{A_{\alpha}\}_{\alpha},\{B_{\beta}\}_{\beta}):= \Lim_{\beta}\LLim_{\alpha}\Hom(A_{\alpha},B_{\beta})$. 

The category  $dg\Art_k$ has finite limits and is  Artinian  in the sense that all objects satisfy the descending chain condition on subobjects. By \cite[Corollary to Proposition 3]{descent}, we can thus assume that all morphisms in our inverse systems are surjections and we can contravariantly identify $\pro(dg\Art_k)$ with the category $\lex(dg\Art_k,\Set)$ of left exact (i.e. finite limit-preserving) set-valued functors on $dg\Art_k$. 

Since every vector space is the nested union of its finite-dimensional subspaces, dualisation gives a contravariant equivalence between the category of vector spaces and the category of pro-finite-dimensional vector spaces. Similarly, the functor sending a pro-object $A=\{A_{\alpha}\}_{\alpha}$ to its continuous dual $A^*:= \LLim_{\alpha} A_{\alpha}^*$ defines a contravariant equivalence of categories between $\pro(dg\Art_k)$ and the category of dg coalgebras $C$  which are unital in the sense of \cite[Definition 2.1.1]{hinstack} (i.e. $C= k \oplus \bar{C}$ for $\bar{C}$ ind-conilpotent), because the Fundamental Theorem of Coalgebras ensures that every dg coalgebra is the nested union of its finite-dimensional dg  subcoalgebras. 

The notation in the following definition is fairly nonstandard, with $\Omega \dashv \b$ or $\cL \dashv \C$ being more common than $\beta^* \dashv \beta$.

\begin{definition}
 Define $\beta \co \DGLA_k \to \pro(dg\Art_k)^{\op}$ to be the functor sending a DGLA $L$ to the object pro-representing the left exact functor $\mc(L,-) \co dg\Art_k \to \Set$. Explicitly, $\beta(L)$ is the free local pro-Artinian graded-commutative algebra 
 \[
  \beta(L):=\prod_{n\ge 0} \widehat{\Sym}^n(L^*[-1]) \cong  (\bigoplus_{n \ge 0} \Co\Symm^n(L[1]))^*,
 \]
 with differential $d$ given on generators by the map $L^*[-1] \to L^* \oplus \widehat{\Sym}^2(L^*[-1])[1]$ dual to $d_L  + \half [-,-]$.
 
 Here,  $(-)^*$ denotes  the continuous dual  sending a nested union $\LLim_{\alpha} V_{\alpha}$ of finite-dimensional vector spaces to the pro-finite-dimensional space $\{V_{\alpha}^*\}_{\alpha}$. 
 
 Define $\beta^*$ to be the left adjoint to $\beta$, sending $A$ to the free graded Lie algebra generated by the continuous dual $\m(A)^*[-1]$, with differential given on generators by the map $\m(A)^*[-1] \to \m(A)^* \oplus \Lambda^2(\m(A)^*[-1])[1]$ dual to the sum of $d_A$ and the multiplication map.
 
 In particular, note that 
$
\Hom_{\DGLA_k}(\beta^*A,L) \cong \mc(L,A) \cong \Hom_{\pro(dg\Art_k)}(\beta(L),A).
$
\end{definition}

The following is a rephrasing of \cite[Theorems 3.1 and 3.2]{hinstack} combined with \cite[Proposition 4.36]{ddt1}.
\begin{proposition}\label{quillenequivprop}
 There is a model structure on $\DGLA_k$ in which weak equivalences are quasi-isomorphisms and fibrations are surjections. The adjunction $\beta^* \dashv \beta$ induces a contravariant Quillen equivalence with a model structure on $\pro(dg\Art_k)$ in which fibrations are surjections and weak equivalences are $\beta^*$-quasi-isomorphisms. The latter model structure is fibrantly cogenerated, with cogenerating fibrations (resp. trivial fibrations) given by  small extensions (resp. acyclic small extensions) in $dg\Art_k$.
\end{proposition}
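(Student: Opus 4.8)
The plan is to assemble the statement from three ingredients: a transferred model structure on $\DGLA_k$, a dual fibrantly cogenerated model structure on $\pro(dg\Art_k)$, and the classical bar--cobar acyclicity that promotes $\beta^*\dashv\beta$ to a Quillen equivalence. Each piece is essentially \cite[Theorems 3.1 and 3.2]{hinstack} and \cite[Proposition 4.36]{ddt1}, so the real work is checking that the three fit together under the continuous-dual dictionary recalled above.

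First I would build the model structure on $\DGLA_k$ by right transfer along the forgetful functor $\DGLA_k\to\Coch_k$, whose left adjoint is the free graded Lie algebra functor $\Fr$. Equip $\Coch_k$ with its projective structure (surjections as fibrations, quasi-isomorphisms as weak equivalences, every object fibrant), which over a field is cofibrantly generated; then declare a DGLA map a fibration or weak equivalence exactly when its underlying cochain map is one. The only nonformal point in Quillen's transfer criterion is that relative cell complexes built from the images under $\Fr$ of the generating trivial cofibrations are weak equivalences; in characteristic $0$ this is guaranteed by the functorial path object $L\mapsto L\ten_k k[t,dt]$ (polynomial de Rham forms on the interval), which is again a DGLA and carries two augmentations to $L$. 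This yields exactly the structure with surjective fibrations and quasi-isomorphism weak equivalences.

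Next I would transport Hinich's coalgebra model structure across the contravariant equivalence between $\pro(dg\Art_k)$ and unital dg coalgebras recalled above. Under this dictionary the weak equivalences become, by definition, the $\beta^*$-quasi-isomorphisms and the fibrations the surjections. The subtle and main point is that this structure is \emph{fibrantly cogenerated}, with small extensions cogenerating the fibrations and acyclic small extensions the trivial fibrations. I would exploit that every object of $dg\Art_k$ is finite-dimensional, hence co-compact in $\pro(dg\Art_k)$, so that the dual (cosmall object) argument applies; together with the observation from the preceding subsection that every surjection factorises as a composite of small extensions, this shows that small extensions suffice to detect the relevant lifting properties. The delicate bookkeeping --- that cofiltered limits of (acyclic) small extensions remain fibrations (resp.\ trivial fibrations) and that the co-object factorisations stay inside the pro-category --- is the heart of \cite[Proposition 4.36]{ddt1} and the chief obstacle.

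Finally I would verify the Quillen equivalence. In the contravariant adjunction $\Hom_{\DGLA_k}(\beta^*A,L)\cong\Hom_{\pro(dg\Art_k)}(\beta L,A)$ the Quillen condition is that $\beta$ send cofibrations of DGLAs to surjections and trivial cofibrations to acyclic surjections. Since $\beta(L)=\prod_n\widehat{\Sym}^n(L^*[-1])$, a cofibration $L\into L'$ induces a surjection $(L')^*\onto L^*$ and hence a surjection $\beta(L')\onto\beta(L)$, giving the fibration half; the trivial case follows by the same $\beta^*$-quasi-isomorphism computation. Because the weak equivalences on $\pro(dg\Art_k)$ are \emph{defined} to be $\beta^*$-quasi-isomorphisms, $\beta^*$ reflects weak equivalences by construction, so the standard criterion reduces the equivalence to showing the derived counit $\beta^*\beta L\to L$ is a quasi-isomorphism. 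This is precisely the statement that the cobar of the Chevalley--Eilenberg complex $\beta(L)$ resolves $L$ --- Koszulness of the Lie operad in characteristic $0$ --- provable by the explicit contracting homotopy on $\beta^*\beta L\cong\Omega\b L$. Assembling the three steps gives the asserted contravariant Quillen equivalence.
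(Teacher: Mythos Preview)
Your proposal follows the paper's own route: the paper gives no argument beyond citing \cite[Theorems 3.1 and 3.2]{hinstack} for the two model structures and the Quillen equivalence, together with \cite[Proposition 4.36]{ddt1} for fibrant cogeneration, and your three-step expansion unpacks exactly those references. The first two steps are accurate.

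There is, however, a genuine error in your third step. You write that ``the Quillen condition is that $\beta$ send cofibrations of DGLAs to surjections and trivial cofibrations to acyclic surjections''. This is not the Quillen condition. Writing the contravariant adjunction covariantly as $\beta^*\co \pro(dg\Art_k)^{\op}\to\DGLA_k$ left adjoint to $\beta\co\DGLA_k\to\pro(dg\Art_k)^{\op}$, the Quillen condition is either that the left adjoint $\beta^*$ preserve cofibrations (i.e.\ send surjections in $\pro(dg\Art_k)$ to cofibrations of DGLAs), or equivalently that the right adjoint $\beta$ preserve fibrations (i.e.\ send surjections of DGLAs to \emph{cofibrations} of $\pro(dg\Art_k)$). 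What you verify --- that the \emph{right} adjoint $\beta$ preserves \emph{cofibrations} --- is neither of these, and does not establish the Quillen adjunction. Your argument also silently assumes that DGLA cofibrations are injections, which is true but not immediate from the transferred structure.

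The correct check, which is what Hinich carries out in coalgebra language, is on $\beta^*$: for a surjection $A\onto B$ in $\pro(dg\Art_k)$ the induced inclusion $\m(B)^*\into\m(A)^*$ splits as graded pro-finite-dimensional vector spaces, so $\beta^*B\to\beta^*A$ is, as a map of graded Lie algebras, a free extension and hence a cofibration; the acyclic case then feeds into the derived counit argument you give, and the rest of your step (reflection of weak equivalences by $\beta^*$, and $\beta^*\beta L\to L$ a quasi-isomorphism via Koszulness of $\Lie$) goes through as stated.
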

Beware that $\beta^*$-quasi-isomorphism is a more restrictive notion than quasi-isomorphism, although they agree for objects of $\pro(dg_+\Art_k) \subset \pro(dg\Art_k)$ by \cite[Proposition 3.3.2]{hinstack}. The final statement of the proposition implies that the  homotopy category $\Ho(\pro(dg\Art_k))$ is given by localising $\pro(dg\Art_k)$ at limits of filtered systems of acyclic small extensions. This homotopy category is a non-abelian analogue of the derived categories of the second kind we encountered in Example \ref{defchainex}.

\begin{remarks}
 There is nothing particularly special about Lie and commutative algebras in the formation of this adjunction. There are similar constructions for algebras over any Koszul dual pair of dg operads\footnote{This question inspired the notion of Koszul duality for operads, first proposed in \cite{drinfeldtoschechtman}.}, with the Maurer--Cartan functor still providing the adjunction because tensor product of algebras of the respective types yields a DGLA. Such equivalences are explicitly described in a slightly different setting in \cite{CalaqueCamposNuiten}, but also see \S \ref{summarysn}\ref{summarykoszul} below.
 
 In particular, interchanging the roles of commutative and Lie algebras produces the adjunction between Quillen and Sullivan rational homotopy types. The latter have only very limited scope for interaction with derived geometry, since they require  quasi-isomorphism invariance for CDGAs 
 concentrated in non-negative \emph{cochain} degrees.
 
 The full subcategory of $\pro(dg\Art_k)^{\op}$ on fibrant objects is equivalent to the category of $L_{\infty}$-algebras and $L_{\infty}$-morphisms, again via the bar construction.
\end{remarks}

%
%


\subsection{Functors on \tps{$\pro(dg\Art_k)$}{pro(dgArt)}}\label{deffunctorsn}

Although a deformation functor on $dg\Art$ is not necessarily of the form $\ddef(L,-)$ for a DGLA $L$, an analogous statement becomes true if we enlarge our test category to incorporate pro-objects. 

We can extend the functors $\mc(L,-)$ and $\Gg(L,-)$ to the whole of $\pro(dg\Art_k)$ by taking limits, and then set $\ddef(L,A):=\mc(L,A)/\Gg(L,A)$.\footnote{Beware that the map $\ddef(L,\{A_{\alpha}\}_{\alpha}) \to \Lim_{\alpha}\ddef(L,A_{\alpha})$ is seldom an equivalence.} 

For the model structure on $\lex(dg\Art_k,\Set)$ induced by the equivalence with $\pro(dg\Art_k)^{\op}$, a morphism $F \to G$ is a fibration (resp. trivial fibration) if $F(A) \to F(B)\by_{G(B)}G(A)$ is surjective for all acyclic small extensions (resp. all small extensions) $A \to B$. 

\begin{lemma}\label{Ggpath}
 The left exact functor $\Gg(L,-) \by \mc(L,-)$ is a path object in $\lex(dg\Art_k,\Set)$  for the fibrant object $\mc(L,-)$.
\end{lemma}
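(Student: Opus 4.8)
The plan is to realize $P:=\Gg(L,-)\by\mc(L,-)$ as the path-object factorization of the diagonal $\mc(L,-)\to\mc(L,-)\by\mc(L,-)$ using the gauge action. I would take the candidate weak equivalence $s\co\mc(L,-)\to P$ to be the identity section $\omega\mapsto(1,\omega)$, and the candidate fibration $(\partial_0,\partial_1)\co P\to\mc(L,-)\by\mc(L,-)$ to record the source and target of a gauge transformation, $(g,\omega)\mapsto(\omega,\,g\star\omega)$. Since $1\star\omega=\omega$, the composite $(\partial_0,\partial_1)\circ s$ is the diagonal, so it suffices to check that $s$ is a weak equivalence and that $(\partial_0,\partial_1)$ is a fibration.

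For the weak equivalence, I would instead show that the projection $\partial_0\co P\to\mc(L,-)$ is a \emph{trivial} fibration and then conclude by two-out-of-three from $\partial_0\circ s=\id$. Unwinding the definition of trivial fibration in this model structure, surjectivity of $P(A)\to P(B)\by_{\mc(L,B)}\mc(L,A)$ for an arbitrary small extension $A\to B$ reduces, after cancelling the common $\mc(L,A)$-factor, to surjectivity of $\Gg(L,A)\to\Gg(L,B)$. Since $\ker(A\to B)$ is central, this is precisely the exact sequence $I^0\to\Gg(L)\to\Gg(M)\to 0$ furnished by Lemma \ref{obsdglalemma}.

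The fibration condition is the crux. For an acyclic small extension $A\to B$ with kernel $I$, a point of $P(B)\by_{(\mc\by\mc)(B)}(\mc\by\mc)(A)$ consists of $(g_B,\omega_B)\in\Gg(L,B)\by\mc(L,B)$ together with $(\omega_A,\omega'_A)\in\mc(L,A)^2$ satisfying $\bar\omega_A=\omega_B$ and $g_B\star\omega_B=\bar\omega'_A$, and I must produce $(g_A,\omega_A)\in P(A)$ with $\bar g_A=g_B$ and $g_A\star\omega_A=\omega'_A$. After choosing any lift $\hat g\in\Gg(L,A)$ of $g_B$ (possible by the surjectivity just used) and setting $\eta:=\hat g\star\omega_A$, a short computation using $I\cdot\m(A)=0$ shows that $z:=\omega'_A-\eta$ lies in $\z^1(\Tot(L\ten I))$, the brackets $[\eta,z]$ and $[z,z]$ vanishing by centrality. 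Acyclicity of $I$ gives $\H^1(\Tot(L\ten I))=0$, so $z=dx$ for some $x\in\Tot(L\ten I)^0$; because $\Tot(L\ten I)$ is central, the gauge action of $h:=\exp(x)$ collapses to $h\star\eta=\eta\pm dx$, whence (after fixing the sign of $x$) $g_A:=h\hat g$ reduces to $g_B$ and satisfies $g_A\star\omega_A=h\star\eta=\omega'_A$, as required.

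The main obstacle is this last step, where one lifts the gauge transformation itself rather than merely a Maurer--Cartan element. The argument only closes because centrality of $I$ linearizes the gauge action into an affine action by $d(\Tot(L\ten I)^0)$, and acyclicity then guarantees the needed primitive $x$; both hypotheses of an \emph{acyclic} small extension are genuinely used, and they enter in complementary fashion to the trivial-fibration computation, which required only centrality.
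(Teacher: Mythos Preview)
Your proof is correct and follows essentially the same route as the paper: establish the weak-equivalence half by showing the projection $P\to\mc(L,-)$ is a trivial fibration via surjectivity of $\Gg(L,A)\to\Gg(L,B)$, and the fibration half by lifting the gauge element, measuring the defect as a cocycle in $\z^1(\Tot(L\ten I))$, and correcting by a primitive using acyclicity of $I$. The only cosmetic differences are the ordering of the two target factors and your multiplicative notation $g_A=\exp(x)\hat g$ versus the paper's additive $\tilde g=\breve g - x$, which agree because $x$ is central.
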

\begin{proof}
 Lemma \ref{obsdglalemma} implies that $\mc(L,-)$ is fibrant. To show that $\Gg(L,-) \by \mc(L,-)$ is a path object, we need  the map $\omega \mapsto (\id, \omega)$ from $\mc(L,-)$ to be a weak equivalence and  the map $\Gg(L,-) \by \mc(L,-) \to \mc(L,-) \by \mc(L,-)$ sending $(g,\omega)$ to  $(g\star \omega,\omega)$ to be a fibration. 
 
 For the first property, just observe that for any small extension $A \to B$, the map $\Gg(L,A) \to \Gg(L,B)$ is surjective, so the projection map $\Gg(L,-) \by \mc(L,-) \to \mc(L,-) $ is a trivial fibration.
 
 For the second property, we need to show that for any small extension $\phi \co A \to B$ with kernel $I$ and any element $(g,\omega,\omega')$ in $\Gg(L,B)\by \mc(L,A)^2$ such that $g\star \phi(\omega)= \phi(\omega')$, there exists an element $\tilde{g} \in \Gg(L,A)$ lifting $g$ with $\tilde{g}\star \omega = \omega'$.  To see this, lift $g$ to an element $\breve{g} \in \Gg(L,A)$ and note that $\breve{g}\star \omega - \omega' \in \z^1(L\ten I)$. Since $I$ is acyclic, this equals  $dx$ for  some $x \in (L\ten I)^0$, and then setting $\tilde{g}:= \breve{g}-x$ gives the required element.
 \end{proof}

 Since all objects  of $\pro(dg\Art_k)$ are fibrant, as an immediate consequence we have:
\begin{proposition}
For all DGLAs $L$ over $k$ and all $A  \in \pro(dg\Art_k)$, we have natural isomorphisms
 \[
\Hom_{\Ho(\DGLA_k)}(\beta^*A,L) \cong \ddef(L,A) \cong \Hom_{\Ho(\pro(dg\Art_k))}(\beta(L),A).
\]
\end{proposition}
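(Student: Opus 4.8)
The plan is to compute both homotopy hom-sets as ordinary homotopy classes of maps, using that in each model category the relevant source is cofibrant and the relevant target fibrant, and then to identify the resulting homotopy relation with gauge equivalence by means of the explicit path object of Lemma \ref{Ggpath}. The underlying (non-homotopical) hom-sets are already identified with $\mc(L,A)$ by the adjunction isomorphism $\Hom_{\DGLA_k}(\beta^*A,L)\cong\mc(L,A)\cong\Hom_{\pro(dg\Art_k)}(\beta(L),A)$ recorded in the definition of $\beta$, so the entire task is to see that passing to the homotopy category quotients $\mc(L,A)$ by the gauge action.

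I would begin with the right-hand isomorphism, working in $\lex(dg\Art_k,\Set)\simeq \pro(dg\Art_k)^{\op}$, under which $\beta(L)$ corresponds to the functor $\mc(L,-)$ and $A$ to some object $\hat{A}$. Since all objects of $\pro(dg\Art_k)$ are fibrant, every object of $\lex(dg\Art_k,\Set)$ is cofibrant; in particular $\hat{A}$ is cofibrant, while $\mc(L,-)$ is fibrant by Lemma \ref{obsdglalemma}. As $\Ho(\lex(dg\Art_k,\Set))\cong \Ho(\pro(dg\Art_k))^{\op}$, we have
\[
\Hom_{\Ho(\pro(dg\Art_k))}(\beta(L),A)\cong \Hom_{\Ho(\lex(dg\Art_k,\Set))}(\hat{A},\mc(L,-)),
\]
and because the source is cofibrant and the target fibrant this last set is the quotient of $\Hom(\hat{A},\mc(L,-))\cong \mc(L,A)$ by the right homotopy relation, which here is already an equivalence relation and computes maps in the homotopy category. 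Using the path object $\Gg(L,-)\by\mc(L,-)$ from Lemma \ref{Ggpath} and applying $\Hom(\hat{A},-)$ (which preserves products and sends $\mc(L,-),\Gg(L,-)$ to $\mc(L,A),\Gg(L,A)$), two classes $\omega_1,\omega_2\in\mc(L,A)$ are right homotopic precisely when there exists $(h,\omega)\in \Gg(L,A)\by\mc(L,A)$ whose images under the two endpoint maps $(h,\omega)\mapsto h\star\omega$ and $(h,\omega)\mapsto\omega$ are $\omega_1$ and $\omega_2$; that is, when $h\star\omega_2=\omega_1$ for some $h\in\Gg(L,A)$. The homotopy relation is therefore exactly gauge equivalence, so the quotient is $\mc(L,A)/\Gg(L,A)=\ddef(L,A)$.

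For the left-hand isomorphism I would invoke the contravariant Quillen equivalence $\beta^*\dashv\beta$ of Proposition \ref{quillenequivprop}. Every DGLA is fibrant (fibrations are surjections and the terminal object is $0$), so $L$ is fibrant; and $A$ is cofibrant in $\pro(dg\Art_k)^{\op}$, so the left Quillen functor $\beta^*$ sends it to a cofibrant object $\beta^*A$, whence $\beta^*A\simeq \oL\beta^*A$ and $\beta(L)\simeq \oR\beta(L)$. The Quillen equivalence then yields a natural bijection
\[
\Hom_{\Ho(\DGLA_k)}(\beta^*A,L)\cong \Hom_{\Ho(\pro(dg\Art_k))}(\beta(L),A),
\]
and combining this with the previous paragraph gives the desired chain. (Alternatively, since $\beta^*A$ is cofibrant and $L$ fibrant one could compute the left-hand set directly as $\mc(L,A)$ modulo homotopy and again match the relation with the gauge action, but transporting across the Quillen equivalence is cleaner.) Naturality in both $L$ and $A$ is automatic, since the adjunction isomorphism, the path object, the category equivalence and the Quillen equivalence are all functorial.

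The main obstacle is the first isomorphism, and specifically the two ingredients that make the abstract $\Ho$-hom-set concrete: the fibrancy of $\mc(L,-)$ together with the cofibrancy of every object of $\lex(dg\Art_k,\Set)$, which ensures that morphisms in the homotopy category are genuine homotopy classes, and the recognition that the particular path object of Lemma \ref{Ggpath} computes nothing other than the gauge action. Keeping the variances straight across the contravariant equivalence $\pro(dg\Art_k)\simeq \lex(dg\Art_k,\Set)^{\op}$ — so that a path object for $\mc(L,-)$ really does detect maps out of $\beta(L)$ — is the easiest place to slip, and is where I would take the most care.
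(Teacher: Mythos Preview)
Your proposal is correct and follows essentially the same approach as the paper, which presents the proposition as an immediate consequence of Lemma~\ref{Ggpath} together with the fact that all objects of $\pro(dg\Art_k)$ are fibrant. You have simply unpacked the model-categorical details the paper leaves implicit: cofibrancy of $\hat{A}$, fibrancy of $\mc(L,-)$, computation of homotopy classes via the explicit path object, and transport across the Quillen equivalence for the left-hand isomorphism.
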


%

\cite[Theorem 6.3]{GuanLazarevShengTangII} then gives the following, by combining Brown-type representability with Proposition \ref{quillenequivprop}:

\begin{theorem}\label{HoDGLAequivthm1}
 The functor $L \leadsto \ddef(L,-)$ gives an equivalence from the category $\Ho(\DGLA_k)$ of $k$-DGLAs localised at quasi-isomorphisms to the category of set-valued functors $F$ on $\pro(dg\Art_k)$ satisfying
 \begin{enumerate}
 \item $F$ sends $\beta^*$-quasi-isomorphisms to isomorphisms, 
  \item for all surjections $A \to B$ and all maps $C \to B$, the map $F(A\by_BC) \to F(A)\by_{F(B)}F(C)$ is a surjection,  and
  \item $F$ preserves  products over $k$ indexed by any (possibly empty) set.
 \end{enumerate}

\end{theorem}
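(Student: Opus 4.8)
The plan is to recognise $\ddef(L,-)$ as the functor corepresented by $\beta(L)$ in the homotopy category, and then to characterise corepresentable functors by combining the ordinary Yoneda lemma with an unstable, corepresentable form of Brown representability, the latter powered by the fibrantly cogenerated structure supplied by Proposition \ref{quillenequivprop}. Concretely, the Proposition immediately preceding the statement gives a natural isomorphism $\ddef(L,A)\cong \Hom_{\Ho(\pro(dg\Art_k))}(\beta(L),A)$, so the assignment $L\leadsto\ddef(L,-)$ factors as
\[
 \Ho(\DGLA_k) \xra{\ \beta\ } \Ho(\pro(dg\Art_k))^{\op} \xra{\ y\ } \mathrm{Fun}(\pro(dg\Art_k),\Set),
\]
where $y(X):=\Hom_{\Ho(\pro(dg\Art_k))}(X,-)$ is the co-Yoneda functor. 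By the contravariant Quillen equivalence of Proposition \ref{quillenequivprop}, the first arrow $\beta$ is already an equivalence of categories, so the whole problem reduces to showing that $y$ is fully faithful with essential image exactly the functors satisfying (1)--(3).

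First I would dispatch the easy, \emph{necessity} direction: each $y(X)$ satisfies (1)--(3). Writing $y(X)(A)=\pi_0\Map(X,A)$ for the derived mapping space (legitimate since all objects are fibrant), condition (1) holds because mapping spaces are homotopy invariant in the second variable and the weak equivalences here are exactly the $\beta^*$-quasi-isomorphisms; condition (3) holds because $\Map(X,-)$ sends products over the terminal object $k$ to products of spaces and $\pi_0$ preserves products (the empty case giving $y(X)(k)=\ast$); and condition (2) holds because for a surjection $A\to B$, i.e.\ a fibration, the square defining $A\by_B C$ is a homotopy pullback, $\Map(X,-)$ preserves homotopy pullbacks, and $\pi_0$ of a homotopy pullback of spaces surjects onto the strict pullback of the $\pi_0$'s.

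The substance lies in the converse. Full faithfulness of $y$ is nothing more than the ordinary Yoneda lemma for the category $\Ho(\pro(dg\Art_k))$: condition (1) identifies functors on $\pro(dg\Art_k)$ inverting weak equivalences with functors on the localisation $\Ho(\pro(dg\Art_k))$, and natural transformations $\Hom_{\Ho}(X,-)\Rightarrow\Hom_{\Ho}(Y,-)$ then correspond bijectively to morphisms $Y\to X$ in $\Ho(\pro(dg\Art_k))$. For the essential image I would invoke Brown representability in its corepresentable form: conditions (2) and (3) are precisely the statements that $F$, viewed on the homotopy category via (1), carries homotopy pullbacks to weak pullbacks and preserves products, which are exactly the hypotheses under which a functor on a \emph{fibrantly cogenerated} model category is corepresentable. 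The cogenerating (acyclic) small extensions named in Proposition \ref{quillenequivprop} are what make the construction run: given such an $F$, one builds a representing object as a filtered limit of small extensions by transfinitely lifting a compatible system of universal elements, with the surjectivity in (2) guaranteeing at each stage that a partial universal element extends along each cogenerating fibration, and (3) furnishing the products needed to initialise the induction.

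The main obstacle is this last construction — the genuine analytic content, cited from \cite[Theorem 6.3]{GuanLazarevShengTangII} — since the transfinite lifting must be organised so that the resulting limit lies in $\pro(dg\Art_k)$ and genuinely corepresents $F$ on the nose rather than merely up to a natural surjection. Everything else is formal. Once $y$ is shown to be fully faithful with essential image the functors satisfying (1)--(3), composing it with the equivalence $\beta$ of Proposition \ref{quillenequivprop} exhibits $L\leadsto\ddef(L,-)$ as the desired equivalence, completing the proof.
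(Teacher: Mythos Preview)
Your proposal is correct and follows essentially the same approach as the paper: the paper attributes the result to \cite[Theorem 6.3]{GuanLazarevShengTangII} ``by combining Brown-type representability with Proposition \ref{quillenequivprop}'', and you have faithfully unpacked precisely that strategy --- factoring through the Quillen equivalence $\beta$, identifying $\ddef(L,-)$ with the corepresentable functor $\Hom_{\Ho}(\beta(L),-)$, and then invoking Brown representability for the essential image. Your acknowledgement that the transfinite construction is the genuine analytic content borrowed from the cited reference matches the paper's own stance exactly.
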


\section{Simplicial functors}
 Because they are defined as quotients, set-valued moduli functors (in particular $\ddef(L,-)$) interact badly with limits, meaning they have poor geometric properties and are seldom representable. Classically, this is resolved by using  groupoid-valued functors such as the Deligne groupoid $\Del(L,-)$, giving rise to moduli stacks in place of moduli spaces. For functors on dg algebras, we have to go further and incorporate homotopies and higher homotopies between automorphisms in order to avoid the same issues. We thus consider functors taking values in $\infty$-groupoids, which are most conveniently modelled as topological spaces (up to weak homotopy equivalence) or simplicial sets (up to Kan--Quillen weak equivalence). 

 Furthermore, functors such as those in Examples \ref{defchainex} and \ref{defalgex} only govern derived deformations when restricted to $dg_+\Art \subset dg\Art$ (cf. \cite{coddt}), but restricting the functor loses information such as negative cohomology groups of the DGLA. By working with simplicial set-valued functors, we can safely restrict to $dg_+\Art$ without losing information. 
 
 \subsection{Hinich's simplicial nerve}
 
 The following, when restricted to $dg_+\Art$, is Hinich's nerve $\Sigma_L$ from \cite[Definition 8.1.1]{hinstack}.
 \begin{definition}\label{mmcdef}
  Given a DGLA $L$, define the simplicial set-valued functor $\mmc(L,-)\co dg\Art \to s\Set$ by 
  \[
   \mmc(L,A)_n:= \mc(\Tot(L\ten \Omega^{\bt}(\Delta^n)),A),
  \]
  with the obvious simplicial structure maps, where $\Omega^{\bt}(\Delta^n)$ is the cdga $\Q[t_0, \ldots, t_n, dt_0, \ldots, dt_n]/(\sum t_i-1,\, \sum dt_i)$ of de Rham polynomial forms on the $n$-simplex, with $t_i$ of degree $0$.
 \end{definition}

 Now, $n \mapsto L \ten \Omega^{\bt}(\Delta^n)$ is a Reedy framing of $L$ in the model category of DGLAs, so the Quillen adjunction $\beta^* \dashv \beta$ gives us weak equivalences
 \[
\oR\map_{\DGLA_k}(\beta^*A,L) \simeq \mmc(L,A) \simeq \oR\map_{\pro(dg\Art_k)}(\beta(L),A)
\]
of simplicial sets, for derived function complexes $\oR\map$ as in \cite[\S 5.4]{hovey}.

\newcommand\DDEL{\uline{\mathfrak{DEL}}}
\begin{definition}\label{ddeldef}
  Given a DGLA $L$, define the simplicial groupoid-valued functor $\DDEL(L,-)\co dg\Art \to \Gpd^{\Delta}$ by 
  \[
   \DDEL(L,A)_n:= \Del(\Tot(L\ten \Omega^{\bt}(\Delta^n)),A),
  \]
 and the functor $\DDel(L,-)\co dg\Art \to s\Gpd$, taking values in simplicially enriched groupoids, by letting $\DDel(L,A)$ have objects $\mc(L,A)$ and simplicial sets $ \DDel(L,A)(\omega,\omega')$ of morphisms given by
 \[
  \DDel(L,A)(\omega,\omega')_n:=\{g \in \Gg(\Tot(L\ten \Omega^{\bt}(\Delta^n))\ten \m(A))~:~ g \star \omega = \omega' \in  \mmc(L,A)_n\}.
 \]
 \end{definition}

 Given a simplicial groupoid $\Gamma$, we can apply the nerve construction $B$ to give a bisimplicial set $B\Gamma$, then take the diagonal to give a simplicial set $\diag B \Gamma$. Then:
 
\begin{lemma}\label{MCDelequivlemma}
There are natural weak equivalences
 \[
  \mmc(L,A) \to \diag B \DDEL(L,A) \la \diag B\DDel(L,A)
 \]
for all DGLAs $L$ and local Artinian cdgas $A$.
\end{lemma}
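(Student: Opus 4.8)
The plan is to prove the two displayed maps are weak equivalences separately, the common engine being the bisimplicial realization lemma: a map of bisimplicial sets that is a weak equivalence in every column induces a weak equivalence on diagonals. Both $B\DDEL(L,A)$ and $B\DDel(L,A)$ are naturally bisimplicial sets, one index coming from the de Rham simplices in $\Omega^{\bt}(\Delta^{\bt})$ and the other from the nerve $B$, and the three objects in the statement are these two diagonals together with the constant bisimplicial object on $\mmc(L,A)$. The displayed arrows are the evident inclusion of objects and the functor forgetting that the Maurer--Cartan objects are constant, and I would apply the realization lemma in complementary directions for the two comparisons.

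For the map $\mmc(L,A)\to\diag B\DDEL(L,A)$, write $\Gg_n:=\Gg(\Tot(L\ten\Omega^{\bt}(\Delta^n)),A)$ and $\mc_n:=\mmc(L,A)_n$. Then $\underline{\Gg}:=([n]\mapsto\Gg_n)$ is a simplicial group acting on the simplicial set $\mmc(L,A)=([n]\mapsto\mc_n)$, and $B\DDEL(L,A)$ is exactly the bar construction of this action, with $(n,p)$-bisimplices $\Gg_n^p\by\mc_n$; its diagonal is the homotopy quotient $\mmc(L,A)/\!/\underline{\Gg}$. Fixing the nerve degree $p$, the comparison map restricts in that column to $\mmc(L,A)\to\underline{\Gg}^p\by\mmc(L,A)$, $\omega\mapsto(1,\dots,1,\omega)$, which is a weak equivalence once $\underline{\Gg}$ is contractible. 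Contractibility holds because the exponential identifies $\underline{\Gg}$, as a simplicial set, with the simplicial vector space $\Tot(L\ten\Omega^{\bt}(\Delta^{\bt})\ten\m(A))^0$, a finite direct sum of copies of the simplicial vector spaces $\Omega^j(\Delta^{\bt})$, each of which is contractible by the standard acyclicity of polynomial de Rham forms on the simplices. The realization lemma then yields the weak equivalence on diagonals.

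For the map $\diag B\DDel(L,A)\to\diag B\DDEL(L,A)$ I would instead fix the de Rham degree $n$ and vary the nerve degree. In that column the map is the nerve of a functor of ordinary groupoids $H_n\to[\mc_n/\Gg_n]$, where $H_n=\DDel(L,A)_n$ has object set $\mc(L,A)$ and morphism sets $\{g\in\Gg_n:g\star\bar{\omega}=\bar{\omega}'\}$ (with $\bar{(-)}$ the constant extension along $\Delta^n\to\Delta^0$), and $[\mc_n/\Gg_n]=\DDEL(L,A)_n$ is the level-$n$ Deligne groupoid. This functor is fully faithful directly from the definitions, so the content is essential surjectivity: every $\eta\in\mc_n$ must be gauge-equivalent over $\Delta^n$ to the constant extension $\bar{\omega}$ of its restriction $\omega=\eta|_{v}$ to a vertex $v$. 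Granting this, $H_n\to[\mc_n/\Gg_n]$ is an equivalence of groupoids for every $n$, hence a weak equivalence on nerves, and the realization lemma finishes the proof.

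The main obstacle is exactly this essential surjectivity, which is the homotopy invariance of the Maurer--Cartan functor under the contraction of the simplex. Evaluation at $v$ and the inclusion of constant forms exhibit $L$ as a deformation retract of $\Tot(L\ten\Omega^{\bt}(\Delta^n))$ through a cochain homotopy $h$ (the algebraic Poincar\'e lemma on $\Delta^n$), and the real work is to promote $h$ to an element of $\Gg_n$ carrying $\bar{\omega}$ to $\eta$. Because $\m(A)$ is nilpotent this can be done by a finite induction along the filtration of $\m(A)$, correcting the discrepancy one associated-graded layer at a time by applying $h$ --- the same mechanism that makes $\mmc(L,A)$ a Kan complex. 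By comparison the contractibility of $\underline{\Gg}$ used in the first part is a standard input on $\Omega^{\bt}(\Delta^{\bt})$, so I expect the gauge-theoretic integration in the second comparison to be where the essential difficulty lies.
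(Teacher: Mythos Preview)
Your treatment of the first map agrees with the paper's: both recognise $\diag B\DDEL(L,A)$ as the homotopy quotient of $\mmc(L,A)$ by the simplicial group $\uline{\Gg}(L,A)$, and deduce the equivalence from contractibility of $\uline{\Gg}(L,A)$.

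For the second map the approaches diverge. You fix the de Rham level $n$ and show $\DDel(L,A)_n \to \DDEL(L,A)_n$ is an equivalence of groupoids, the substance being essential surjectivity: every Maurer--Cartan element over $\Delta^n$ is gauge-equivalent to the constant extension of its value at a vertex, proved by the Poincar\'e homotopy on $\Omega^{\bt}(\Delta^n)$ and induction up the $\m(A)$-adic filtration. This is a correct and self-contained argument (essentially the standard proof that $\mmc$ is homotopy-invariant, as you note), and it has the virtue of making the gauge explicit. The paper instead sidesteps the integration entirely: it observes that for each simplicial set $X$, the functors $A \mapsto [X,\mmc(L,A)]$, $A \mapsto [X,\diag B\DDEL(L,A)]$ and $A \mapsto [X,\diag B\DDel(L,A)]$ are deformation functors on $dg\Art_k$ (using Lemma~\ref{obsdglalemma} and an analogue of Lemma~\ref{Ggpath} for $\DDel$), so by the tangent-cohomology criterion for isomorphisms of deformation functors it suffices to check the map at $A=k[\eps_n]$, where an explicit calculation on spheres gives $[S^i,-]\cong \H^{n+1}(L)\by\H^{n+1-i}(L)$ for both sides. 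The paper's route is shorter given the deformation-functor machinery already in place, while yours is more direct and does not depend on that framework.
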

\begin{proof}
 The simplicial group $\uline{\Gg}(L,A)$ given by $n \mapsto \Gg(L\ten \Omega^{\bt}(\Delta^n),A)$ is contractible. Since the first map is a homotopy quotient by  $\uline{\Gg}(L,A)$, it is a weak equivalence. 
 
The functor  $\DDel(L,-)$ preserves limits, and  an argument similar to Lemma \ref{Ggpath} ensures that it sends small extensions to fibrations of simplicial groupoids, and acyclic small extensions to trivial fibrations. For any simplicial set $X$ and any of the functors $F$ in Definition \ref{ddeldef}, the functor $[X,F(-)]:= \Hom_{\Ho(s\Set)}(X,F(-))$ is thus a deformation functor, so weak equivalence follows by  checking isomorphism on tangent cohomology for spheres, where we have 
 \[
 [S^i, \mmc(L,k[\eps_n])] \cong \H^n\ddef(L\ten \Omega^{\bt}(S^i))  \cong \H^{n+1}(L) \by \H^{n+1-i}(L) \cong [S^i,B\DDel(L,k[\eps_n])].\qedhere 
 \]
 \end{proof}

\begin{example}
As in Example \ref{defalgex}, the DGLA $L:=\DDer_{\cP}(R,R)$ governs deformations of algebras $R$ over $k$-linear dg operads $\cP$. We can now say that $\DDel(L,A)$ is the simplicial groupoid whose objects are $A$-linear deformations of $R$, with  degree $n$ isomorphisms between $\cP\ten A$-algebras $R'$ and $R''$  given by  $\cP\ten A \ten \Omega^{\bt}(\Delta^n)$-algebra morphisms $R' \ten \Omega^{\bt}(\Delta^n) \to R''\ten \Omega^{\bt}(\Delta^n)$ which are the identity modulo $\m(A)$.

 As in \cite{hinichDefsHtpyAlg}, for $A \in dg_+\Art_k$ and $R$ cofibrant, with $\cP$ and $R$ both concentrated in non-negative chain degrees,  the simplicial set $\mmc(L,A) \simeq \diag B\DDel(L,A)$ is weakly equivalent to the $\infty$-groupoid of derived deformations of $R$. Without those constraints, it gives contraderived deformations as in \cite{coddt}.
\end{example}


 \subsection{Simplicial functors on \tps{$dg_+\Art$}{dg+Art}}
 
The following is \cite[Proposition 4.3]{ddt1}: 
 \begin{proposition}\label{quillennonnegmodelprop}
 There is a model structure on  $\pro(dg_+\Art_k)$ in which fibrations are surjective in strictly positive degrees and weak equivalences are quasi-isomorphisms (homology isomorphisms in pro-finite-dimensional vector spaces). The inclusion functor $\pro(dg_+\Art_k) \into \pro(dg\Art_k)$ is left Quillen and preserves weak equivalences; its right adjoint is given by good truncation.
%
\end{proposition}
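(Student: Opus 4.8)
The plan is to produce the model structure by the same fibrant-cogeneration machinery that yields Proposition \ref{quillenequivprop}, now applied to $dg_+\Art_k$ in place of $dg\Art_k$, and then to deduce the statements about the inclusion and good truncation from elementary properties of the latter. Throughout, write $i$ for the inclusion and $\tau_{\geq 0}$ for good truncation, so that $(\tau_{\geq 0}B)_n = B_n$ for $n \geq 1$ and $(\tau_{\geq 0}B)_0 = \z_0(B) = \ker(d\co B_0 \to B_{-1})$.

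First I would record the adjunction $i \dashv \tau_{\geq 0}$. For $A \in dg_+\Art_k$ and $B \in dg\Art_k$, any cdga map $A \to B$ vanishes in negative degrees and carries $A_0$ into $\z_0(B)$ (since $d$ is zero on $A_0$), so it factors uniquely through the sub-cdga $\tau_{\geq 0}B$; this gives $\Hom(iA,B) \cong \Hom(A,\tau_{\geq 0}B)$. Since $\pro(-)$ preserves adjunctions, this propagates to an adjunction $\pro(i) \dashv \pro(\tau_{\geq 0})$ with $\pro(\tau_{\geq 0})$ computed levelwise, which is the last clause of the proposition modulo the model-structure claims.

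The model structure itself is the main point. As $dg_+\Art_k$ has finite limits and satisfies the descending chain condition on subobjects, it lies in the scope of the pro-Artinian construction of \cite{descent, ddt1}. I would take the cogenerating (trivial) fibrations to be the non-negative analogues of the small (resp. acyclic small) extensions used in Proposition \ref{quillenequivprop}, adjusted so that surjectivity is imposed only in strictly positive degrees, and then check the hypotheses of that construction: closure of these classes under composition and base change, the factorizations furnished by the chain condition, and the decisive identification that a map has the relevant lifting property exactly when it is surjective in strictly positive degrees (for fibrations) and additionally a homology isomorphism (for trivial fibrations). The genuinely new feature compared with $dg\Art_k$ is the role of degree $0$: dropping surjectivity there is the pro-Artinian counterpart of the projective model structure on connective chain complexes, with $\z_0$ absorbing the degree-$0$ data, and the crux is to verify that a surjection-in-positive-degrees which is a quasi-isomorphism is, up to retract, a cofiltered limit of acyclic extensions. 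Pinning down the cogenerating sets and checking this matching is where I expect the real work to be; everything else is bookkeeping, and the resulting weak equivalences come out to be exactly the quasi-isomorphisms.

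With the structure in place, $i$ preserves weak equivalences at once: by \cite[Proposition 3.3.2]{hinstack} a quasi-isomorphism between objects of $\pro(dg_+\Art_k)$ is a $\beta^*$-quasi-isomorphism, hence a weak equivalence in $\pro(dg\Art_k)$. Finally, to see that $i$ is left Quillen it is enough to show its right adjoint $\tau_{\geq 0}$ preserves fibrations and trivial fibrations. It preserves fibrations because a surjection remains surjective in strictly positive degrees after good truncation (those degrees are unchanged), and it preserves weak equivalences because a $\beta^*$-quasi-isomorphism is in particular a quasi-isomorphism and good truncation preserves quasi-isomorphisms (it retains $\H_n$ for $n \geq 0$ and kills it for $n < 0$); preserving both, it preserves trivial fibrations. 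Hence $i$ is left Quillen, and the proof is complete.
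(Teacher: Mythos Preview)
The paper does not prove this proposition; it is stated as a citation of \cite[Proposition 4.3]{ddt1}, so there is no in-text argument to compare against. Your outline is a reasonable reconstruction of how such a proof goes, and your verification of the Quillen adjunction is correct: the adjunction $i \dashv \tau_{\ge 0}$ is elementary, $\tau_{\ge 0}$ manifestly preserves surjectivity in strictly positive degrees, and the implication $\beta^*$-quasi-isomorphism $\Rightarrow$ quasi-isomorphism (together with the fact that good truncation preserves $\H_n$ for $n\ge 0$) handles the weak equivalences. The appeal to \cite[Proposition 3.3.2]{hinstack} for $i$ preserving weak equivalences matches exactly what the paper invokes immediately after the proposition.

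One imprecision is worth correcting. You describe the cogenerating fibrations as small extensions ``adjusted so that surjectivity is imposed only in strictly positive degrees'', but the cogenerating (trivial) fibrations in $dg_+\Art_k$ are genuine small (resp.\ acyclic small) extensions, hence surjective in \emph{all} degrees. The relaxation to ``surjective in strictly positive degrees'' describes the resulting class of fibrations in the pro-category, not the cogenerators: it emerges because an acyclic small extension in $dg_+\Art_k$ cannot have kernel concentrated purely in degree $0$, so lifting against them cannot force degree-$0$ surjectivity. Relatedly, note that a fibration which is also a quasi-isomorphism is automatically surjective in degree $0$ as well (compare $\H_0$ using surjectivity of $A_1\to B_1$), so trivial fibrations are exactly surjective quasi-isomorphisms --- this is why the paper's later citation of \cite[Lemma 4.5]{ddt1}, that every surjective quasi-isomorphism in $\pro(dg_+\Art_k)$ is a transfinite composition of acyclic small extensions, is the relevant ingredient rather than the weaker ``surjective in positive degrees'' version you wrote.
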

 
 In contrast to the model structure on the category $\pro(dg\Art_k)$ of unbounded objects from Proposition \ref{quillenequivprop}, for objects in $\pro(dg_+\Art_k)$ the notions of quasi-isomorphism and $\beta^*$-quasi-isomorphism agree \cite[Proposition 3.3.2]{hinstack}. Moreover, every surjective quasi-isomorphism in $\pro(dg_+\Art_k)$ is a transfinite composition of acyclic small extensions \cite[Lemma 4.5]{ddt1}.

We are primarily interested in functors like Hinich's simplicial nerve, so consider the category $\lex(dg_+\Art_k,s\Set)$ of left exact functors from $dg_+\Art$ to simplicial sets. By \cite[Corollary to Proposition 3]{descent}, $\lex(dg_+\Art_k,\Set) $ is equivalent to $\pro(dg_+\Alg_k)^{\op}$, so $\lex(dg_+\Art_k,s\Set) $ is contravariantly equivalent to the category $\pro(dg_+\Alg_k)^{\Delta}$ of cosimplicial diagrams.
The following then  follows from \cite[Proposition 4.12]{ddt1}: 
  \begin{proposition}\label{sdgmodel}
 There is a cofibrantly generated simplicial model structure on $\lex(dg_+\Art_k,s\Set)$ in which a morphism $F \to G$ is:
 \begin{itemize}
  \item
  a fibration if the morphism $F(A) \to G(A)\by_{G(B)}F(B)$ is a Kan fibration (resp. a trivial Kan fibration) for all small extensions (resp. acyclic small extensions) $A \to B$ in $dg_+\Art_k$;
  \item
  a trivial fibration if the  morphism $F(A) \to G(A)\by_{G(B)}F(B)$ is a trivial Kan fibration for all small extensions $A \to B$ in $dg_+\Art_k$.
 \end{itemize}
 For a morphism $\eta \co F \to G$ between fibrant objects, the following conditions are equivalent:
 \begin{enumerate}
  \item $\eta$ is a weak equivalence;
  \item $\eta_A \co F(A) \to G(A)$ is a weak equivalence for all $A \in dg_+\Art_k$;
  \item $\eta_{k[\eps_n]} \co F(k[\eps_n]) \to G(k[\eps_n])$ is a weak equivalence for all $n \ge 0$.
 \end{enumerate}
 \end{proposition}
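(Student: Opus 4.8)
The plan is to obtain the asserted structure by transporting the cofibrantly generated model structure on the base category to its simplicial objects, and then reading off fibrations and weak equivalences by Yoneda. Set $\cN:=\lex(dg_+\Art_k,\Set)$; by \cite[Corollary to Proposition 3]{descent} it is contravariantly equivalent to $\pro(dg_+\Art_k)$, so by Proposition \ref{quillennonnegmodelprop} it carries a cofibrantly generated model structure in which (as in the cogeneration statement of Proposition \ref{quillenequivprop}) the generating cofibrations $I_{\cN}$ are the representable maps $h_B\to h_A$ indexed by small extensions $A\onto B$ and the generating trivial cofibrations $J_{\cN}$ are those indexed by acyclic small extensions. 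Every object of $\pro(dg_+\Art_k)$ is fibrant, so every object of $\cN$, in particular each $h_A$, is cofibrant. Since limits in $s\Set$ are levelwise, $\lex(dg_+\Art_k,s\Set)$ is the category $s\cN$ of simplicial objects of $\cN$, tensored over $s\Set$ by $(h_A\ten K)(C)=\Hom(A,C)\times K$. I would generate the model structure using the Leibniz products
\[
 I:=I_{\cN}\mathbin{\hat{\ten}}\{\partial\Delta^n\into\Delta^n\},\qquad J:=\bigl(I_{\cN}\mathbin{\hat{\ten}}\{\Lambda^r_n\into\Delta^n\}\bigr)\cup\bigl(J_{\cN}\mathbin{\hat{\ten}}\{\partial\Delta^n\into\Delta^n\}\bigr),
\]
declaring $\eta\co F\to G$ a weak equivalence when the induced maps $F'(A)\to G'(A)$ between functorial $J$-injective replacements are weak equivalences of simplicial sets for every $A$.

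The two fibration statements then follow formally. The (co)Yoneda isomorphism gives $\Hom_{s\cN}(h_A,F)=F(A)$, and combined with the two-variable adjunction for $\mathbin{\hat{\ten}}$ it shows that a map $p\co F\to G$ has the right lifting property against $i\mathbin{\hat{\ten}}\iota$, for $i\co h_B\to h_A$ and $\iota$ a boundary (resp. horn) inclusion, precisely when $F(A)\to G(A)\by_{G(B)}F(B)$ lifts against $\iota$. Hence the $I$-injective maps are exactly those for which $F(A)\to G(A)\by_{G(B)}F(B)$ is a trivial Kan fibration for every small extension, and the $J$-injective maps are exactly those for which it is a Kan fibration for every small extension and a trivial Kan fibration for every acyclic one. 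These are the trivial fibrations and fibrations named in the statement.

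To establish the model structure via the Kan recognition theorem I would verify that $W$ is closed under retracts and two-out-of-three (inherited from $s\Set$), that $I,J$ have small domains (all categories in sight are locally presentable), that $J$-cells are trivial cofibrations, and that $I$-injectives are weak equivalences; the remaining axiom, that every cofibration which is a weak equivalence is a $J$-cofibration, is then formal. The one substantial input for the third point is that each representable $h_B\to h_A$ attached to an acyclic small extension is a weak equivalence in $\cN$ (Proposition \ref{quillennonnegmodelprop}); the pushout-product axiom, which simultaneously furnishes the simplicial enrichment, propagates this to all of $J$. That $I$-injectives (trivial fibrations) are weak equivalences is the same tower induction as the implication (3)$\Rightarrow$(2), and I would prove the two together.

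For fibrant $F,G$, cofibrancy of $h_A$ gives $\oR\map(h_A,F)\simeq\map(h_A,F)=F(A)$, so (1)$\Leftrightarrow$(2), while (2)$\Rightarrow$(3) is trivial. The crux is (3)$\Rightarrow$(2), an obstruction-theoretic induction in the spirit of Proposition \ref{obsprop} and \cite[Corollary 3.3]{Man2}. Left exactness forces $F(k)=G(k)=\ast$, since $k$ is terminal in $dg_+\Art_k$; this is the base case. Factoring $A$ as a tower of small extensions $A=A_N\onto\cdots\onto A_0=k$ with kernels $I_j$, I would invoke the presentation from the proof of Proposition \ref{obsprop}: an acyclic small extension $\tilde A_{j-1}\to A_{j-1}$ together with an identification $A_j=\tilde A_{j-1}\by_{k\oplus I_j[1]}k$. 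Left exactness turns the latter into the strict pullback $F(A_j)=F(\tilde A_{j-1})\by_{F(k\oplus I_j[1])}F(k)$, which fibrancy (the small-extension Kan-fibration condition) upgrades to a homotopy pullback; and because $k\oplus I_j[1]$ is a finite product over $k$ of algebras $k[\eps_m]$, left exactness makes $F(k\oplus I_j[1])$ a product of tangent spaces $F(k[\eps_m])$. Since $\eta$ is a weak equivalence on $F(\tilde A_{j-1})\simeq F(A_{j-1})$ by induction and on every tangent space by hypothesis (3), and homotopy pullbacks are invariant under weak equivalences, $\eta_{A_j}$ is a weak equivalence. The principal obstacle is exactly this step: one must check that the strict pullbacks computed by left exactness really are homotopy pullbacks, which is precisely what fibrancy provides, and that the linearised fibres are governed by the $F(k[\eps_n])$.
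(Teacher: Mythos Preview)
The paper does not give a proof here; it simply cites \cite[Proposition 4.12]{ddt1}. Your outline is a reasonable reconstruction of how such a proof would go, and the obstruction-theoretic argument for (3)$\Rightarrow$(2) is correct and in the spirit of Propositions \ref{obsprop} and \ref{obsprop2}. Two points deserve attention.

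First, a minor misidentification: you claim Proposition \ref{quillennonnegmodelprop} endows $\cN$ with a cofibrantly generated model structure whose generating cofibrations are the $h_B\to h_A$ for small extensions $A\to B$. That is not the model structure described there. Remark \ref{sstrrmk} makes this explicit: small extensions only generate the class of surjections, not all fibrations in $\pro(dg_+\Art_k)$, so the structure from Proposition \ref{quillennonnegmodelprop} has strictly more fibrations than the one cogenerated by small extensions. This is not fatal, since your actual construction on $s\cN$ defines $I$ and $J$ directly and never needs $I_{\cN},J_{\cN}$ to generate any pre-existing model structure on $\cN$; but you should drop the appeal to Proposition \ref{quillennonnegmodelprop} as the source of those generators.

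Second, and more seriously, the verification that $J$-cells lie in $W$ is where the real work is, and invoking ``the pushout-product axiom'' is circular: that axiom is part of what you are trying to establish. Concretely, you must show that pushouts of your generating trivial cofibrations remain weak equivalences, which for the maps $(h_B\to h_A)\mathbin{\hat\ten}(\partial\Delta^n\hookrightarrow\Delta^n)$ with $A\to B$ acyclic requires controlling the pushout of a representable along a cofibration in $s\cN$. The proof in \cite{ddt1} (and its corrigendum, alluded to in the footnote in \S\ref{summarysn}) handles this by an explicit algebraic analysis: given a small extension $A\to B$ with kernel $M$ and a cofibration $A\to S$, one identifies the pushout as a central extension with kernel essentially $M\ten_A S$, so that a contracting homotopy on $M$ propagates. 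Your sketch does not supply this ingredient, and without it the recognition theorem cannot be closed.
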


 \begin{example}
  If $L \to M$ is a surjective morphism of DGLAs, then $\mmc(L,-) \to \mmc(M,-)$ is a fibration. We also have $\pi_i \mmc(L,k[\eps_n])\cong \H^{1+n-i}(L)$, so quasi-isomorphisms of DGLAs give rise to weak equivalences of Hinich nerves. 
 \end{example}

 \begin{remarks}\label{sstrrmk}
  Note that the condition for an object $F$ of $\lex(dg_+\Art_k,s\Set)$ to be fibrant is weaker than asking for the induced map $F \co \pro(dg_+\Art_k) \to s\Set$ to be right Quillen, because small extensions only generate the class of surjections, not all fibrations in $\pro(dg_+\Art_k)$. This slight relaxation introduces groupoid-like behaviour and corresponds to the difference between representability by a scheme or by  a stack.
  
 The simplicial structure on  $\lex(dg_+\Art_k,s\Set)$ is simply given by  defining $F^K(A):= F(A)^K$, for $K \in s\Set$, and then the simplicial set $\HHom(U,F)$ is given by $n \mapsto \Hom(U,F^{\Delta^n})$.
 
 
 In the statement of Proposition \ref{sdgmodel}, we have not described general weak equivalences, but these can be characterised as follows. All objects of the category are cofibrant, so a morphism $U \to V$ is a weak equivalence if and only if the sets $\pi_0\HHom(V,F) \to \pi_0\HHom(U,F)$ of homotopy classes of morphisms are isomorphisms for all fibrant objects $F$.
    
  Proposition \ref{sdgmodel} also has generalisations \cite[Theorem 2.14]{ddt1} to finite and mixed characteristic, using simplicial Artinian rings in place of dg Artinian rings. 
 \end{remarks}

 \section{Representability and comparisons}\label{repcompsn}
 
 \subsection{Representability and Schlessinger's conditions}
 
 Given a functor $F \co dg_+\Art_k \to s\Set$, it is now natural to ask whether it is weakly equivalent to a fibrant object $G$ of $\lex(dg_+\Art_k,s\Set)$, i.e. whether there exists a zigzag of objectwise weak equivalences  going from $F$ to $G$. It is fairly easy to see necessary conditions on $F$, by identifying those properties of $G$ which are invariant under weak equivalence:
 \begin{enumerate}
  \item For any acyclic small extension $A \to B$, the map $F(A) \to F(B)$ is a weak equivalence.
\item For any small extension $A \to B$ and any map $C \to B$ in $dg_+\Art_k$, the map 
\[
F(A\by_BC) \to F(A)\by^h_{F(B)}F(C)
\]
to the homotopy fibre product is a weak equivalence. This follows because $G(A\by_BC) \cong G(A)\by_{G(B)}G(C)$ and $G(A) \to G(B)$ is a Kan fibration, so $G(A)\by_{G(B)}G(C) $ is a model for $G(A)\by_{G(B)}^hG(C)$.
\item Similarly, $F(k)$ is contractible.
  \end{enumerate}
On  taking path components, these recover conditions very close to those of Schlessinger \cite{Sch}, because $\pi_0(X\by^h_YZ) \onto (\pi_0X)\by_{(\pi_0Y)}(\pi_0Z)$ and  $\pi_0(X\by^h Z) \cong \pi_0X \by \pi_0Z$. 

The following is then \cite[Definition 2.28, as adapted in Theorem 4.14]{ddt1}:
  
\begin{definition}\label{schless}
Define the category $\cS$   to consist of functors $F: dg_+\Art_k\to s\Set$ satisfying the  conditions above; we refer to these as derived Schlessinger functors.
%
%
%

Say that a natural transformation $\eta:F \to G$ between such functors is a weak equivalence if the maps $F(A) \to G(A)$ are weak equivalences for all $A\in dg_+\Art_k$, and let $\Ho(\cS)$ be the category obtained by formally inverting all weak equivalences in $\cS$.
\end{definition}

\begin{remark}
 Lurie \cite{lurieDAG10} refers to similar functors on weakly Artinian cdgas as \emph{formal moduli problems}, but the conditions are neither sufficient nor necessary to endow a functor with a natural moduli interpretation, and  Examples \ref{defchainex} and \ref{defalgex} mention examples of natural formal moduli problems (in the traditionally understood sense) which do not give rise to derived Schlessinger functors (i.e. formal moduli problems in Lurie's sense). 
\end{remark}

 Manetti's obstruction theory from Proposition \ref{obsprop} extends to such functors with an almost identical argument:
 \begin{proposition}\label{obsprop2}
 For any derived Schlessinger functor $F \co dg_+\Art_k \to s\Set$ and any small extension $e \co A \to B$ with kernel $I$, we have a natural homotopy fibre sequence
 \[
  F(A) \to F(B) \xra{o_e}F(k \oplus I[1])
 \]
in the homotopy category of simplicial sets, where $I[1]$ is square-zero.
\end{proposition}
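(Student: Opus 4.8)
The plan is to reuse the cone construction from the proof of Proposition \ref{obsprop} verbatim and reinterpret its set-theoretic conclusion homotopically. First I would form $\tilde{B}$, the mapping cone of the inclusion $I \into A$, equipped with its square-zero cdga structure; since $A$, and hence $I$ and $I[1]$, are concentrated in non-negative chain degrees, both $\tilde{B}$ and $k\oplus I[1]$ lie in $dg_+\Art_k$. As before this yields an acyclic small extension $\phi \co \tilde{B} \to B$, a surjection $\rho \co \tilde{B} \onto k\oplus I[1]$, and the pullback identity $A = \tilde{B}\by_{\rho,\,k\oplus I[1]}k$, with the projection $A \to \tilde{B}$ composing with $\phi$ to recover $e$. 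The acyclicity condition defining a derived Schlessinger functor (Definition \ref{schless}) makes $F(\phi)$ a weak equivalence, so I can set $o_e := F(\rho)\circ F(\phi)^{-1}$ in $\Ho(s\Set)$.

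The core step is to apply the homotopy-pullback condition to the surjection $\rho$ and the map $k \to k\oplus I[1]$. Although $\rho$ is not itself a small extension, it factors as a finite composite of small extensions, and homotopy-cartesian squares compose, so the condition propagates to arbitrary surjections (iterating the pullback along the factorisation and pasting the resulting homotopy pullbacks). It therefore gives a weak equivalence
\[
F(A) \xra{\sim} F(\tilde{B})\by^h_{F(k\oplus I[1])}F(k).
\]
Because $F(k)$ is contractible, the right-hand side is a model for the homotopy fibre of $F(\rho)$ over the basepoint, so $F(A) \to F(\tilde{B}) \xra{F(\rho)} F(k\oplus I[1])$ is a homotopy fibre sequence.

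Finally I would transport this along the weak equivalence $F(\phi) \co F(\tilde{B}) \xra{\sim} F(B)$. Since the projection $A \to \tilde{B}$ followed by $\phi$ equals $e$, the composite $F(A) \to F(\tilde{B}) \simeq F(B)$ is identified with the natural map $F(e)$, and the sequence rewrites as the asserted homotopy fibre sequence $F(A) \to F(B) \xra{o_e} F(k\oplus I[1])$. Naturality in $e$ follows from the functoriality of the assignment $e \mapsto \tilde{B}$.

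I expect the main obstacle to be the promotion of the set-valued conclusion — that the kernel of $o_e$ is the image of $F(A)$ — to a genuine homotopy fibre sequence: in Proposition \ref{obsprop} one needed only surjectivity of $F(A) \to F(\tilde{B})\by_{F(k\oplus I[1])}F(k)$, whereas here one needs the full homotopy-pullback property, together with the observation that this property, imposed only for small extensions, extends to all surjections by composability of homotopy pullbacks. Once this is in hand, the argument is otherwise identical to that of Proposition \ref{obsprop}.
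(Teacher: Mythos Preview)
Your proposal is correct and is precisely the argument the paper has in mind: the text states that Proposition~\ref{obsprop2} follows from Proposition~\ref{obsprop} ``with an almost identical argument'', and you have spelled out exactly that, including the two extra checks needed in the simplicial setting (that $\tilde{B}$ and $k\oplus I[1]$ remain in $dg_+\Art_k$, and that the homotopy-pullback axiom propagates from small extensions to arbitrary surjections by factorisation and pasting). There is nothing to add.
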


Any functor $F \in \cS$ admits a natural extension $\overrightarrow{F}$ first to the pro-category $\pro(dg_+\Art)$ and then to the category of cosimplicial diagrams $\pro(dg_+\Art)^{\Delta}$, in both cases by passing to homotopy limits. Note that such an extension was not available for the set-valued functors of \S \ref{koszulsn}\ref{deffunctorsn} because they tended not to preserve limits. On taking $\pi_0$, we then have a set-valued functor $\pi_0\overrightarrow{F}$ on $\Ho(\pro(dg_+\Art)^{\Delta})$.

\begin{remark}
As for instance in \cite[Definition 1.13]{drep}, we can define tangent cohomology groups\footnote{The notation $\DD$ is based on that for Andr\'e--Quillen cohomology groups in \cite{Q}, with which these coincide for representable functors.} for derived Schlessinger functors by 
$
\DD^{n-i}(F,V):= \pi_iF(k \oplus V[n])
$
for $i,n\ge 0$ and $V$ a finite-dimensional non-negatively graded $k$-chain complex, equipped with zero multiplication.

The derived Schlessinger conditions ensure that this is well-defined and also imply that $\DD^n(F,V) \cong \bigoplus_j  \DD^{n+j}(F,k)\ten \H_jV$. Repeated application of Proposition \ref{obsprop2} implies that tangent cohomology groups detect weak equivalences.
\end{remark}

  The following is then \cite[Theorem 4.14]{ddt1}; although only stated there on the level of homotopy categories, it implies full faithfulness on simplicial localisation  because  the functor preserves the cotensoring of \ref{sstrrmk}, so for
  $K \in s\Set$  we have
  \begin{align*}
   &\Hom_{\Ho(s\Set)}(K, \oR\map_{\lex(dg_+\Art_k,s\Set)_{\mathrm{fib}}}(F,G))  \cong \pi_0\oR\map_{\lex(dg_+\Art_k,s\Set)_{\mathrm{fib}}}(F,G^K) \\
   &\cong \pi_0\oR\map_{\cS}(F,G^K) \cong     \Hom_{\Ho(s\Set)}(K, \oR\map_{\cS}(F, G)).
   \end{align*}
  Essential surjectivity is established there by applying Heller's generalisation 
  \cite{heller} 
  of Brown representability  to $\pi_0\overrightarrow{F}$, along similar lines to Theorem \ref{HoDGLAequivthm1}.  

 \begin{theorem}\label{dgschrep}
The natural functors 
\[
 \lex(dg_+\Art_k,s\Set) \hookleftarrow \lex(dg_+\Art_k,s\Set)_{\mathrm{fib}} \to \cS
\]
 induce equivalences on simplicial localisation at weak equivalences, where $(-)_{\mathrm{fib}}$ denotes the full subcategory of fibrant objects. In particular, there is a canonical equivalence between the  homotopy category $\Ho(\lex(dg_+\Art_k,s\Set))$ and the homotopy category $\Ho(\cS)$.
\end{theorem}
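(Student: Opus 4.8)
The plan is to establish that each of the two functors induces an equivalence of simplicial localisations separately, and then combine them. For the inclusion $\lex(dg_+\Art_k,s\Set)_{\mathrm{fib}} \hookrightarrow \lex(dg_+\Art_k,s\Set)$ I would appeal to the general principle that, in any model category, the full subcategory of fibrant objects carries the same simplicial localisation as the whole category: fibrant replacement equips every object with a weak equivalence into a fibrant one, giving essential surjectivity, and because all objects here are cofibrant by Remark \ref{sstrrmk}, derived mapping spaces are already computed with fibrant targets, giving homotopical full faithfulness. With the model structure of Proposition \ref{sdgmodel} in hand, this step is formal and carries no real difficulty.

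The content lies in the comparison $\lex(dg_+\Art_k,s\Set)_{\mathrm{fib}} \to \cS$. First I would check it is well-defined, i.e. that a fibrant object $G$ is a derived Schlessinger functor: an acyclic small extension $A \to B$ makes $G(A) \to G(B)$ a trivial Kan fibration, a small extension makes $G(A) \to G(B)$ a Kan fibration so that $G(A\by_B C) \cong G(A)\by_{G(B)}G(C)$ is a genuine model for the homotopy fibre product, and $G(k)$ is contractible. These are precisely the conditions defining $\cS$ in Definition \ref{schless}, and since the weak equivalences agree on both sides by definition, the functor respects them.

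For full faithfulness at the level of simplicial localisations I would bootstrap from the homotopy-category equivalence of \cite[Theorem 4.14]{ddt1} using the cotensoring $G \mapsto G^K$, $G^K(A):= G(A)^K$, which the comparison preserves. A map of simplicial sets is a weak equivalence exactly when it induces bijections $[K,-]$ for every $K$; combined with the adjunction identification $[K,\oR\map(F,G)]\cong\pi_0\oR\map(F,G^K)$ of the target with a homotopy-category hom-set, the agreement of those hom-sets on the two sides (the homotopy-category content supplied by \cite[Theorem 4.14]{ddt1}) forces the full derived mapping spaces to agree. This is exactly the reduction of $\infty$-categorical full faithfulness to the $\pi_0$-level statement recorded in the displayed chain of isomorphisms preceding the theorem.

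The main obstacle is essential surjectivity: given an arbitrary derived Schlessinger functor $F$, one must produce a fibrant left exact $G$ with $F\simeq G$. Following the pattern of Theorem \ref{HoDGLAequivthm1}, I would extend $F$ to $\overrightarrow{F}$ on $\pro(dg_+\Art_k)$ and then on $\pro(dg_+\Art_k)^{\Delta}$ by homotopy limits, pass to the set-valued functor $\pi_0\overrightarrow{F}$ on $\Ho(\pro(dg_+\Art_k)^{\Delta})$, and apply Heller's Brown-type representability theorem \cite{heller}; the representing object then yields the sought fibrant functor. Verifying Heller's hypotheses is the genuinely hard part: the requirement that the functor send coproducts to products comes from the product-preservation built into $\cS$, while the requirement that it send the relevant homotopy pushouts to weak pullbacks is supplied by the homotopy fibre product condition together with the obstruction theory of Proposition \ref{obsprop2}. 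Combining the two equivalences then gives the final equivalence $\Ho(\lex(dg_+\Art_k,s\Set)) \simeq \Ho(\cS)$.
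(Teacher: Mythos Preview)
Your proposal is correct and follows essentially the same approach as the paper: the cotensoring argument to upgrade the homotopy-category equivalence of \cite[Theorem 4.14]{ddt1} to full faithfulness on simplicial localisations, and Heller's Brown-type representability applied to $\pi_0\overrightarrow{F}$ for essential surjectivity, along the lines of Theorem~\ref{HoDGLAequivthm1}. One small imprecision: the product-preservation needed for Heller is not an axiom of $\cS$ but rather a feature of the extension $\overrightarrow{F}$, which is defined by homotopy limits and hence preserves them by construction.
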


See \cite[Theorem 0.0.13]{lurieDAG10} for a later variant in the setting of  Remark \ref{cfluriermk}, previously stated without proof as \cite[Remark 6.2.5]{lurie}.

\subsection{The equivalences}\label{equivsn}

Our next step is to compare the model categories $ \lex(dg_+\Art_k,s\Set) \simeq (\pro(dg_+\Alg_k)^{\Delta})^{\op}$ and $ \lex(dg\Art_k,\Set) \simeq (\pro(dg\Alg_k))^{\op}$. In order to do so, we will introduce an intermediate category of bigraded algebras, mapping naturally to both. 

\begin{definition}
 Define $DG^+dg_+\Art_k$ to consist of cochain chain complexes $A^{\ge 0}_{\ge 0}$ equipped with a   unital associative bigraded-commutative  multiplication with respect to which the differentials act as derivations, equipped with a homomorphism $A \to k$ for which the kernel $\m(A)$ is nilpotent and finite-dimensional. 
\end{definition}

\begin{definition}
 The total complex $(\Tot A)_n:= \bigoplus_i A^i_{i+n}$ defines a functor $\Tot \co DG^+dg_+\Art_k \to dg\Art_k$, which extends to a functor $\Tot^{\Pi} \co \pro(DG^+dg_+\Art_k) \to \pro(dg\Art_k)$ on passing to limits.
\end{definition}

\begin{definition}
Say that  a  map $f: A \to B$ in $DG^+dg_+\Art_k$ is a small extension if it is surjective   with kernel $I$ satisfying $\m(A)\cdot I=0$. Say that it is an acyclic small extension if moreover $\H_*(\Tot I)=0$.  
\end{definition}

\begin{definition}
 Cosimplicial denormalisation gives a functor $D \co  DG^+dg_+\Art_k \to (dg_+\Art_k)^{\Delta}$, with multiplication given by the Eilenberg--Zilber shuffle product \cite[Definition 4.20]{ddt1}.  
%
\end{definition}

The following is \cite[Theorems 4.26 and 4.48]{ddt1}, and completes the chain of comparisons. It can be motivated by the observation that  generating cofibrations in the model structure on   $\pro(dg_+\Alg_k)^{\Delta}$ from Theorem \ref{sdgmodel} all arise as small extensions in  $(dg_+\Alg_k)^{\Delta}$, with generating trivial cofibrations all also inducing quasi-isomorphisms of product total complexes.  

\begin{theorem}\label{dequiv}
There is a fibrantly cogenerated  model category structure on $\pro(DG^+dg_+\Art_k)$, with  cogenerating fibrations the class of small extensions in $DG^+dg_+\Art_k$ and trivial cogenerating fibrations the class of acyclic small extensions.

Moreover, the functors 
$D:\pro(DG^+dg_+\Art_k) \to \pro(dg_+\Art_k)^{\Delta}$ and $\Tot^{\Pi}:\pro(DG^+dg_+\Art_k) \to \pro(dg\Art_k)$
are right Quillen equivalences.
\end{theorem}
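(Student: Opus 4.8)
The plan is to equip $\pro(DG^+dg_+\Art_k)$ with a single model structure whose weak equivalences are detected interchangeably by both functors, and then to reduce the Quillen properties to a computation on cogenerators, so that only the final equivalence step carries genuinely homotopical content. I would \emph{define} a map $f$ to be a weak equivalence if $\Tot^{\Pi}f$ is a quasi-isomorphism in $\pro(dg\Art_k)$. By the Eilenberg--Zilber theorem the normalised cochain complex of $D$ recovers the cochain direction of $A$ up to natural quasi-isomorphism, so totalising against the chain direction shows this is equivalent to $Df$ being a weak equivalence in $\pro(dg_+\Art_k)^{\Delta}$; this agreement of the two detecting functors is precisely what permits one model structure to serve both comparisons. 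The fibrantly cogenerated structure is then produced by the dual small object argument in the pro-category, as developed in \cite{descent}: cofiltered limits converge because every object is pro-Artinian (descending chain condition), so one obtains functorial factorisations of any map into a trivial cofibration followed by a cofiltered limit of pullbacks of small extensions, and dually for the acyclic cogenerators.

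The one substantive model-category axiom is the coincidence of the two notions of triviality, namely that the maps with the left lifting property against every acyclic small extension are exactly the weak-equivalence cofibrations. I would establish this with the analogue for $DG^+dg_+\Art_k$ of \cite[Lemma 4.5]{ddt1}: a surjective weak equivalence in the pro-category is a cofiltered limit of a tower of acyclic small extensions, obtained by choosing at each stage a minimal dg ideal $I$ and using $\H_*(\Tot I)=0$ together with pro-Artinianity to force the tower to terminate, exactly as in the footnoted minimality argument of Remark \ref{cfluriermk}. Granting this, the usual retract argument upgrades the two factorisations into a model structure with the stated cogenerators.

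For the Quillen properties I would check the functors on cogenerators. A small extension $A \to B$ with kernel $I$ satisfies $\m(\Tot A)\cdot \Tot I = \Tot(\m(A)\cdot I)=0$ and stays surjective, so $\Tot^{\Pi}$ sends it to a small extension in $dg\Art_k$; since acyclicity is measured by $\H_*(\Tot I)$ on both sides, acyclic cogenerators go to acyclic small extensions. Likewise, cosimplicial denormalisation turns a cochain-direction small extension into a levelwise small extension of cosimplicial Artinian algebras with surjective matching maps, which is a cogenerating fibration for the structure of Proposition \ref{sdgmodel} transported along the equivalence $\lex(dg_+\Art_k,s\Set)\simeq(\pro(dg_+\Art_k)^{\Delta})^{\op}$. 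Because $\Tot^{\Pi}$ and $D$ are right adjoints they preserve the cofiltered limits, pullbacks and retracts out of which all fibrations are built from the cogenerators, so both are right Quillen.

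Finally, for the equivalences I would use that all objects are fibrant and that, by construction, both functors reflect weak equivalences, so it suffices to show each derived unit is a weak equivalence on cofibrant objects. For $D$ this is the monoidal Dold--Kan correspondence: the Eilenberg--Zilber shuffle product makes normalisation and denormalisation mutually inverse up to natural weak equivalence, and the essential image of $D$ meets every cosimplicial Artinian algebra up to levelwise quasi-isomorphism, so both derived unit and counit are equivalences. For $\Tot^{\Pi}$ the derived unit is checked to be a quasi-isomorphism of total complexes by filtering by cochain degree and running the associated spectral sequence, the point being that the left adjoint spreads a single grading over the bigrading without altering the total complex up to quasi-isomorphism. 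I expect this last step to be the main obstacle: comparing the bigraded and singly-graded homotopy theories through $\Tot^{\Pi}$ while tracking the nilpotent ideal in the pro-Artinian setting is where a change of grading, rather than a formal cogenerator manipulation, carries the homotopical weight.
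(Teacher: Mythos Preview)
Your architecture---cogenerated model structure via the dual small object argument, right-Quillen verification on cogenerators, then unit/counit for the equivalences---is correct and matches the paper's. The model-structure and Quillen-functor steps go essentially as you describe. The divergence is in the two equivalence arguments.

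For $D$, invoking a ``monoidal Dold--Kan correspondence'' glosses over the fact that normalisation is only lax monoidal, so $D$ has no algebra-level inverse. The paper (see \S\ref{summarysn}) instead filters by the central series, applies Dold--Kan and Eilenberg--Zilber to the associated graded where the multiplication is trivial, and uses that $D$ \emph{reflects} ($\Tot$-acyclic) central extensions to conclude that the derived unit and counit are transfinite composites of duals of acyclic central extensions. Your idea is in the right spirit; the central series filtration is what makes it rigorous.

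The genuine gap is your treatment of $\Tot^{\Pi}$. Your description of its left adjoint as ``spreading a single grading over the bigrading without altering the total complex'' does not define a functor: objects of $\pro(dg\Art_k)$ are unbounded below, and there is no canonical way to unfold them into first-quadrant bicomplexes with prescribed total complex, so the spectral-sequence comparison you envisage has no input. The paper's argument (which it flags as ``more unusual'') avoids the left adjoint entirely. The key observation is that the right Quillen functor $\Tot^{\Pi}$ also \emph{preserves cofibrant objects}---dually, $\Tot$ on coalgebras preserves cofree objects---and satisfies $\tan(\Tot C)\cong \Tot\tan(C)$ on indecomposables. Since weak equivalences between cofibrant objects are detected by $\H_*\H^*$ of the cotangent complex, and since the right adjoint has an explicit effect on cotangent cohomology of cofibrant objects ($\H_i\H^0\tan(\Tot_*B)\cong \H^{-i}\tan B$ and $\H_0\H^n\tan(\Tot_*B)\cong \H^n\tan B$), both unit and counit can be checked directly on cotangents without ever writing down the left adjoint. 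Your spectral-sequence sketch does not supply this mechanism, and your proposed detection of weak equivalences by $\Tot^{\Pi}$-quasi-isomorphism of underlying complexes is not the operative criterion on the target side: weak equivalences in $\pro(dg\Art_k)$ are $\beta^*$-quasi-isomorphisms, strictly finer than quasi-isomorphisms, so the comparison must pass through the cotangent rather than the total complex.
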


Since $DG^+dg_+\Art_k$ is an Artinian category, we can identify 
$\pro(DG^+dg_+\Art_k)$ with $\lex(DG^+dg_+\Art_k, \Set)$, and then a morphism $F \to G$ is a fibration (resp. trivial fibration) if $F(A) \to F(B)\by_{G(B)}G(A)$ is surjective for all acyclic small extensions (resp. all small extensions) $A \to B$. 

The adjoint functor 
$ 
\Tot^{\Pi}_* \co \lex(dg\Art_k,\Set) \to \lex(DG^+dg_+\Art_k, \Set) 
$
(a right Quillen equivalence) is simply given by $\Tot^{\Pi}_*F(A):= F(\Tot A)$, while the adjoint functor
$
D_* \co \lex(dg_+\Art_k, s\Set)  \to \lex(DG^+dg_+\Art_k, \Set) 
$
(also a right Quillen equivalence) 
is given by the end $D_*F(A):= \int_{n \in \Delta} F_n(D^nA)$. 
 
 
%

 \section{Summary of the argument and generalisations}\label{summarysn}
 
 \subsection{Koszul duality}\label{summarykoszul}
 
 The proofs of the results of \S \ref{koszulsn} rely only on Koszul duality between the Lie and non-unital commutative operads (via the non-unital algebras $\m(A)$ associated to each local Artinian cdga $A \cong k \oplus \m(A)$).
 They generalise to any dg operad $\cL$ (generalising $\Lie$) and any dg co-operad $\C$ (generalising $\Com^*$) equipped with a morphism $\alpha \co \Omega \C \to s\cL$ in the notation of \cite[\S 6.5.9]{lodayvalletteoperads}\footnote{The datum $\alpha$ is known as an operadic twisting morphism, and itself arises as a Maurer--Cartan solution.}
 for which the 
 twisted composite products $\cL\circ_{\alpha}\C$ and $\C\circ_{\alpha}\cL$ of \cite[\S 6.4.11]{lodayvalletteoperads} are acyclic.
 
 
 The map $\alpha$ ensures that for any $\C$-coalgebra $C$ and $\cL$-algebra $L$, we have a set $\mc(\Hom(C,L))$ 
 as in \cite[Definition 11.1.1]{lodayvalletteoperads}.
 Under the additional conditions of \cite[\S 6.6]{lodayvalletteoperads}, the acyclicity conditions  are equivalent to asking that $\alpha$ be a quasi-isomorphism.

 
\subsubsection{The Quillen equivalence}\label{koszulequivsn}
 Writing  $\mathrm{DG}\cL\mathrm{A}$ for  category of $\cL$-algebras in cochain complexes, $\mathrm{DG\C C}^{\nilp}$ for the category of ind-conilpotent  $\C$-coalgebras in cochain complexes, i.e. coalgebras for the comonad $V \mapsto \bigoplus_n \C(n)\ten_{k[S_n]} V^{\ten n}$, and  $dg\Art(\C)$ for  the category of duals of finite-dimensional objects of $\mathrm{DG\C C}^{\nilp}$, by \cite[Theorem 2.1]{valletteHtpyThHtpyAlg} we have a
right Quillen equivalence $\beta$:
 \[
 \xymatrix@R=2.5ex{
  \mathrm{DG}\cL\mathrm{A} \ar[r]^-{\mc} \ar[d]_{\beta} & \lex(dg\Art(\C),\Set)\\
      \mathrm{DG\C C}^{\nilp}   \ar@{-}[r]^-{\sim} & \pro(dg\Art(\C))^{\op} \ar@{-}[u]_-*[@]{\sim}                                            ,
   }
   \]
for the projective model structure on $\mathrm{DG}\cL\mathrm{A}$ (in particular with quasi-isomorphisms as weak equivalences) and a model structure  ``of the second kind'' (by analogy with \cite{positselskiDerivedCategories}) on $\mathrm{DG\C C}^{\nilp}$ for which fibrant objects are those which are cofree as graded coalgebras (forgetting the differential) and  weak equivalences are $\beta^*$-quasi-isomorphisms. Weak equivalences between fibrant objects $C$ are maps inducing quasi-isomorphisms on complexes $\tan(C) \subset C$ of indecomposables, i.e. elements on which all operations vanish 
(isomorphic to cogenerators).

We can go further and say that  generating cofibrations (resp. trivial cofibrations) in $\mathrm{DG\C C}^{\nilp}$ are dual to central (resp. acyclic central) extensions in  $dg\Art(\C)$.  

  \subsubsection{Key features of the proof} 

The $\infty$-equivalence in \cite{valletteHtpyThHtpyAlg} between $\cL$-algebras up to quasi-isomorphism and $\C$-coalgebras up to $\beta^*$-quasi-isomorphism simply follows because the co-unit $ \beta^*\beta L \to L$ of the adjunction is always a quasi-isomorphism, by acyclicity of $\cL\circ_{\alpha}\C$.  A similar argument using the central series filtration shows that for fibrant $C \in  \mathrm{DG\C C}^{\nilp}$, the tangent $ \tan C \to \tan (\beta \beta^*C) \cong \beta^*C[1]  $ of the unit is a quasi-isomorphism.
 
 That 
$\beta^*$-quasi-isomorphisms are generated by duals $f \co C \to D$ of acyclic central extensions. 
is  more subtle. Centrality means the filtration $F_0D:=C$, $F_1D:=D$ on $D$ is compatible with the coalgebra structure, inducing an increasing filtration on $\beta^*D$ with associated graded $\beta^*(C \oplus \coker f)$, graded by powers of the acyclic complex $\coker f$, making $f$ a $\beta^*$-quasi-isomorphism. Conversely, as in the first part of proof of \cite[Theorem 11.3.7]{lodayvalletteoperads}, 
acyclicity of $\C \circ_{\alpha}\cL$ implies the unit  $C \to \beta\beta^*C$ of the adjunction is a transfinite composition  of such maps. 
 
 
As in Theorem \ref{HoDGLAequivthm1}, we can then identify the homotopy category  $\Ho( \mathrm{DG}\cL\mathrm{A})$ with the category of set-valued functors on $(\mathrm{DG\C C}^{\nilp})^{\op} $ satisfying some half-exactness conditions, the functor associated to $L$ being $\ddef(\Hom_k(-,L))$.

 \subsubsection{Generalisations}
 There are also generalisations taking the algebras and coalgebras in larger semisimple symmetric monoidal categories, such as categories of representations of pro-reductive algebraic groups, as feature in pro-algebraic homotopy theory. 
 
 The characteristic $0$ hypothesis for the equivalence of \S \ref{summarysn}\ref{summarykoszul}\ref{koszulequivsn} is only needed because we work with symmetric operads; a similar equivalence exists over any base field for non-symmetric operads such as the associative operad.
 
 \subsection{Relating Manetti's functors with simplicial functors}
 
 The logical next step in the comparison, as covered in \S \ref{repcompsn}\ref{equivsn}, is to discard the negative cochain degrees of $\C$-coalgebras (dually, the negative chain degrees of $\C$-algebras), recovering the same data by working with simplicial functors. For this approach to work in general, we have to assume that $\C$ is concentrated in non-negative cochain degrees.\footnote{There is an obvious generalisation if we allow $\C$ to be the total complex of a co-operad in chain cochain complexes, but the simplicial functors become less manageable because the category of test objects changes with each simplicial degree.} \footnote{However, comparison with weakly Artinian objects as  in Remark \ref{cfluriermk} relies on coconnectivity of the Lie operad to ensure an analogue of \cite[Lemma 4.3.1]{boothDefThPt}, specifically that 
 for all $i$, $\H^{i-n}\cL(n)=0$ for $n \gg 0$.
 }
 
 \subsubsection{Quillen equivalences}
 For such co-operads, we have the following Quillen adjunctions, writing right Quillen equivalences on the top row and left Quillen equivalences on the bottom:
 \[
 \xymatrix@R=2.5ex{
  \lex(dg\Art(\C),\Set) \ar[r]^-{\Tot_*}  &  \lex(DG^+dg_+\Art(\C),\Set)   &  \ar[l]_-{D_*} \lex(dg_+\Art(\C),s\Set)   \\
  \mathrm{DG\C C}^{\nilp} \ar@{-}[u]_-*[@]{\sim} & \ar[l]_{\Tot} \mathrm{dg_+DG^+\C C}^{\nilp} \ar[r]^D \ar@{-}[u]_-*[@]{\sim} & (\mathrm{DG^+\C C}^{\nilp})^{\Delta^{\op}}\ar@{-}[u]_-*[@]{\sim},
  }
 \]
where weak equivalence in these model structures is again stronger than quasi-isomorphism.

Trivial fibrations in $\lex(dg\Art(\C),\Set)$ and $\lex(DG^+dg_+\Art(\C),\Set)$ send central extensions to surjections, while fibrations send acyclic  extensions (resp. $\Tot$-acyclic central extensions) to surjections. Trivial fibrations in $\lex(dg_+\Art(\C),s\Set)$ map central extensions to trivial Kan fibrations, while fibrations map central extensions to Kan fibrations and acyclic central extensions to trivial Kan fibrations.
\footnote{One step in establishing the model structure \cite[Theorem 2.14]{ddt1}  on $\lex(dg_+\Art(\C),s\Set)$ requires a more refined argument outside the commutative setting. Given a central extension $A \to B$ with kernel $M$, and a cofibration $A \to S$, the first unnumbered corrigendum lemma preceding that theorem  uses the description of the pushout as a central extension with kernel $M\ten_AS$. In general, we instead have to consider the filtration $F$ on $S$ generated by $M \subset F^1S$, so  $\gr_F^0S \cong S\coprod_AB$ and $\hat{\gr}_FS \cong (\gr_F^0S)\coprod_B(B \oplus M)$. Then $S$ is the limit of   a sequence of abelian extensions $S/F^{p+1}S \to S/F^pS$, with associated   obstructions in $\H^2\Tot^{\Pi}\DDer(\gr_F^0S, \gr_F^pS)$. When $M$ has a contracting homotopy, it is inherited by $\gr_F^pS$ for $p>0$, so the obstruction vanishes.}

Fibrant objects $C \in \mathrm{dg_+DG^+\C C}^{\nilp}$ are those which are cofree as bigraded ind-conilpotent coalgebras and satisfy the additional condition that the subcomplex $\tan(C) \subset C$ of indecomposables satisfies $\H_i\tan(C)^n=0$ for all $i\ge 0, n\ge 0$ and $\H^n\tan(C)_i=0$ for all $i>0,n>0$, with reasoning similar to \cite[Lemma 1.56]{ddt1}. Morphisms between fibrant objects are weak equivalences precisely when they induce isomorphisms on $\H_*\H^*\tan$.

\subsubsection{Key features of the proof}
That the adjunction $D \dashv D_*$ gives an $\infty$-equivalence follows in two stages. The proof of \cite[Lemma 4.25]{ddt1}, which applies the Dold--Kan and Eilenberg--Zilber comparisons to the central series filtration, adapts generally to show that for fibrant objects $C \in (\mathrm{DG^+\C C}^{\nilp})^{\Delta^{\op}}$, the co-unit $DD_*C \to C$ of the adjunction is a transfinite composition of duals of acyclic central extensions. 
Moreover, as in  \cite[Lemma 4.25]{ddt1}, 
$D$  reflects ($\Tot$-acyclic) central extensions, so it follows that for any $B \in \mathrm{dg_+DG^+\C C}^{\nilp}$ and any fibrant replacement $DB \into \widehat{DB}$,  the adjoint map $B \into D_*\widehat{DB}$ is a trivial cofibration. 

That the adjunction $\Tot \dashv \Tot_*$ gives an $\infty$-equivalence is a more unusual argument \cite[Theorem 4.48]{ddt1}. The left Quillen functor $\Tot$ preserves fibrant objects, since they are cofree,  and satisfies $\tan(\Tot C) \cong \Tot\tan(C)$. Moreover, $\Tot_*$ automatically preserves fibrant objects and has $\H_i\H^0\tan(\Tot_*B) \cong \H^{-i}\tan B$ and $\H_0\H^n\tan(\Tot_*B)\cong \H^n\tan B$ for such objects.
 For any fibrant $C \in  \mathrm{dg_+DG^+\C C}^{\nilp}$ (resp.  fibrant $B \in  \mathrm{DG\C C}^{\nilp}$), the unit  $C \to \Tot_*\Tot C$  (resp. co-unit $\Tot \Tot_*B \to B$) is then a weak equivalence between fibrant objects because it induces a $\Tot$-quasi-isomorphism (resp. quasi-isomorphism) on indecomposables. 

\subsection{Representability}
The comparison is completed  by establishing representability using Heller representability \cite{heller} 
as in Theorem \ref{dgschrep}, with the natural inclusions
\[
\xymatrix@1{\lex(dg_+\Art(\C),s\Set) & \ar@{_{(}->}[l] \ \lex(dg_+\Art(\C),s\Set)_{\mathrm{fib}} \ar@{^{(}->}[r] & \cS( \C)} 
\]
becoming equivalences on simplicial localisation at weak equivalences. Here, $\cS(\C)$ consists of functors from $dg_+\Art(\C)$ to $s\Set$ which map acyclic central extensions to weak equivalences and preserve homotopy fibre products whenever one of the maps is a central extension.

Again, this step holds for any dg co-operad $\C$ concentrated in non-negative chain degrees, though it is also true much more generally. Analogues are given in \cite[Theorems 2.14 and 2.30]{ddt1} for simplicial Artinian rings in finite and mixed characteristic, but the argument should apply to much more general Artinian categories with well-behaved analogues of the classes of central and acyclic central extensions. 

\begin{corollary}\label{maincor}
 The functor $L \mapsto \mmc(L,-)$ from the category of DG $\cL$-algebras to the category $\cS(\C)$ of derived Schlessinger functors induces an equivalence of $\infty$-categories on localisation at weak equivalences. In particular, it induces an equivalence $\Ho(\mathrm{DG}\cL A_k)\simeq \Ho(\cS(\C))$ of homotopy categories. 
 \end{corollary}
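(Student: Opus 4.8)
The plan is to read the functor $L\mapsto\mmc(L,-)$ as the composite of the three families of equivalences assembled above, and then to verify that this composite does indeed carry $L$ to $\mmc(L,-)$. Concretely, I would string together the zigzag
\[
\mathrm{DG}\cL\mathrm{A} \xra{\;\mc\;} \lex(dg\Art(\C),\Set) \xra{\;\Tot_*\;} \lex(DG^+dg_+\Art(\C),\Set) \xla{\;D_*\;} \lex(dg_+\Art(\C),s\Set),
\]
in which $\mc$ (equivalently the bar construction $\beta$, under the duality $\mathrm{DG\C C}^{\nilp}\simeq\pro(dg\Art(\C))^{\op}$) is the right Quillen equivalence of \S\ref{koszulequivsn}, while $\Tot_*$ and $D_*$ are the two right Quillen equivalences displayed above (the co-operadic generalisation of \S\ref{equivsn}), valid whenever $\C$ is concentrated in non-negative chain degrees. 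Finally I would append the representability equivalence $\lex(dg_+\Art(\C),s\Set)\simeq\cS(\C)$ on simplicial localisations, the analogue of Theorem \ref{dgschrep}.

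Since $\Tot_*$ and $D_*$ are right Quillen equivalences into the common category $\lex(DG^+dg_+\Art(\C),\Set)$, their derived functors $\oR\Tot_*$ and $\oR D_*$ are equivalences of the associated $\infty$-categories, equivalently of their simplicial localisations. Composing $\oR\Tot_*$ with a homotopy inverse of $\oR D_*$ yields an equivalence $\lex(dg\Art(\C),\Set)\simeq\lex(dg_+\Art(\C),s\Set)$ on localisations; prepending the Koszul equivalence of \S\ref{koszulequivsn} and appending representability then produces a single $\infty$-equivalence $\Phi\co\mathrm{DG}\cL\mathrm{A}\to\cS(\C)$ on localisations. As every factor is an $\infty$-equivalence, so is $\Phi$, and passing to homotopy categories gives the stated equivalence $\Ho(\mathrm{DG}\cL\mathrm{A}_k)\simeq\Ho(\cS(\C))$.

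The substantive step is to identify $\Phi$ with $L\mapsto\mmc(L,-)$. Both $\mc(L,-)$ and $\mmc(L,-)$ are fibrant (by the analogue of Lemma \ref{Ggpath} and by the example following Proposition \ref{sdgmodel}, taking $L\to 0$), so their images under $\oR\Tot_*$ and $\oR D_*$ are computed without fibrant replacement. It therefore suffices to produce a natural weak equivalence $\Tot_*\,\mc(L,-)\simeq D_*\,\mmc(L,-)$ in $\lex(DG^+dg_+\Art(\C),\Set)$, for then applying a homotopy inverse of $\oR D_*$ returns $\mmc(L,-)$ on one side and $\Phi(L)$ (through the representability identification) on the other. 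Unwinding the end formula for $D_*$, for $A\in DG^+dg_+\Art(\C)$ the right-hand term is
\[
D_*\,\mmc(L,-)(A)=\int_{n\in\Delta}\mc\bigl(\Tot(L\ten\Omega^{\bt}(\Delta^n)),\,D^n A\bigr),
\]
whereas the left-hand term is simply $\mc(L,\Tot A)$.

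The comparison of these two expressions is where the real work lies, and I expect it to be the main obstacle. After using the Fubini isomorphism for iterated product total complexes to rewrite each factor, one invokes the Dold--Kan and Eilenberg--Zilber comparisons applied to the cosimplicial de Rham framing $n\mapsto\Omega^{\bt}(\Delta^n)$, exactly along the lines of \cite[Lemma 4.25]{ddt1}; this is the transport across the $\Tot$/$D$ comparison of the statement that $n\mapsto L\ten\Omega^{\bt}(\Delta^n)$ is a framing of $L$. It is the one place in the argument that genuinely uses the concrete shape of $\Omega^{\bt}(\Delta^{\bt})$ rather than formal properties of Quillen equivalences, and one must additionally check naturality in $L$ and compatibility with the $\beta^*$-quasi-isomorphism (second-kind) weak equivalences rather than with ordinary quasi-isomorphisms. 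Granting this identification, $\Phi(L)\simeq\mmc(L,-)$ naturally and the corollary follows.
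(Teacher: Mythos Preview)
Your chain of equivalences matches the paper's exactly; the difference is in the identification step. You propose to compare $\Tot_*\mc(L,-)$ with $D_*\mmc(L,-)$ directly as functors on $DG^+dg_+\Art(\C)$, leading to the end computation you flag as the main obstacle. The paper instead argues via derived mapping spaces: for fibrant $F$ corresponding to $L$ and $A\in dg_+\Art(\C)$, it identifies $F(A)\simeq\oR\map(h_A,F)$ using the Reedy framing $n\mapsto F^{\Delta^n}$, then uses that the Quillen equivalences of Theorem \ref{dequiv} preserve the subcategory of representables $h_A$ coming from $\pro(dg_+\Art(\C))$, giving $F(A)\simeq\oR\map_{\lex(dg\Art(\C),\Set)}(h_A,\mc(L,-))\simeq\mmc(L,A)$, the last step because $n\mapsto L\ten\Omega^{\bt}(\Delta^n)$ is a Reedy framing of $L$. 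This sidesteps your explicit Dold--Kan/Eilenberg--Zilber comparison on bigraded test objects entirely; the only role of $\Omega^{\bt}(\Delta^{\bt})$ is as a framing, which is a purely formal property. Your route should work in principle, but the obstacle you anticipate is genuine and nontrivial to carry out; the paper's argument avoids it by never leaving the representable test objects.
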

 \begin{proof}
  Combining the analogues of Proposition \ref{quillenequivprop}, Theorem \ref{dequiv} and Theorem \ref{dgschrep}, we have equivalences
  \begin{align*}
   \Ho(\mathrm{DG}\cL \mathrm{A}_k)\xra[\sim]{\mc} \Ho(\lex( dg\Art(\C),\Set)) \simeq &\Ho(\lex(DG^+dg_+\Art(\C), \Set) )\\
   &\simeq \Ho(\lex(dg_+\Art(\C), s\Set)) \simeq \Ho(\cS(\C)),
  \end{align*}
and similarly on the corresponding $\infty$-categories. It remains to show that the composite functor has the form claimed, so assume that $F \in  \Ho(\lex(dg_+\Art(\C), s\Set)) $ is a fibrant object corresponding to a DG $\cL$-algebra $L$. 

Let $h_A \in \lex(dg_+\Art(\C),\Set)$ be the object represented by $A \in \pro(dg_+\Art(\C))$, and then since $n \mapsto F^{\Delta^n}$ is a Reedy framing for $F$, we have
\[
 \oR\map_{\lex(dg_+\Art(\C),s\Set)}(h_A,F) \simeq \left(n \mapsto \Hom_{\lex(dg_+\Art(\C),s\Set)}(h_A,F^{\Delta^n})\right) \cong F(A).
\]
Because the  equivalences in Theorem \ref{dequiv} preserve the respective copies of $\pro(dg_+\Art(\C))\simeq \lex(dg_+\Art(\C),\Set)^{\op}$ as subcategories, this must also be equivalent to 
\[
 \oR\map_{\lex(dg\Art(\C),\Set)}(h_A,\mc(L,-)) \simeq \mmc(L,A). \qedhere
\]
 \end{proof}

 
\subsubsection*{Acknowledgements} I would like to thank the anonymous referees for diligently identifying omissions, oversights and particularly gaps in some arguments for generalising results to the stated generality.
%
%

\begin{thebibliography}{biblio}

\bibitem[Boo20]{boothDefThPt}
Matt Booth.
 The derived deformation theory of a point.
 {\em Math. Z.}, 300:3023--3082, 2022.
 arXiv:2009.01590.


\bibitem[CCN19]{CalaqueCamposNuiten}
Damien Calaque, Ricardo Campos, and Joost Nuiten.
 Moduli problems for operadic algebras.
 {\em Journal of the London Mathematical Society}, 106(4):3450–3544,
  August 2022.
arXiv:1912.13495v3 [math.AT]

\bibitem[Dri88]{drinfeldtoschechtman}
Vladimir Drinfeld.
 A letter from {K}harkov to {M}oscow.
 {\em EMS Surveys in Mathematical Sciences}, 1(2):241--248,
  2014.
  
  \bibitem[DK87]{DKEquivsHtpyDiagrams}
W.~G. Dwyer and D.~M. Kan.
 {VIII}. Equivalences between homotopy theories of diagrams.
 In {\em Algebraic Topology and Algebraic K-Theory ({AM}-113)}, pages
  180--205. Princeton University Press, 1987.

\bibitem[GLST19]{GuanLazarevShengTangII}
Ai~Guan, Andrey Lazarev, Yunhe Sheng, and Rong Tang.
 Review of deformation theory {II}: a homotopical approach.
 {\em Adv. Math. (China)}, 49(3):278--298, 2020.
 arXiv:1912.04028 [math.AT].

\bibitem[Gro60]{descent}
Alexander Grothendieck.
 Technique de descente et th{\'e}or{\`e}mes d'existence en
  g{\'e}om{\'e}trie alg{\'e}brique. {II}. {L}e th{\'e}or{\`e}me d'existence en
  th{\'e}orie formelle des modules.
 In {\em S{\'e}minaire Bourbaki, Vol.\ 5}, pages Exp.\ No.\ 195,
  369--390. Soc. Math. France, Paris, 1995.

\bibitem[Hel81]{heller}
Alex Heller.
 On the representability of homotopy functors.
 {\em J. London Math. Soc. (2)}, 23(3):551--562, 1981.

\bibitem[Hin98]{hinstack}
Vladimir Hinich.
 D{G} coalgebras as formal stacks.
 {\em J. Pure Appl. Algebra}, 162(2-3):209--250, 2001.
 https://arxiv.org/abs/math/9812034.

\bibitem[Hin99]{hinichDefsHtpyAlg}
Vladimir Hinich.
 Deformations of homotopy algebras.
 {\em Communications in Algebra}, 32(2):473--494, 2004.
 arXiv:math/9904145.

\bibitem[Hov99]{hovey}
Mark Hovey.
 {\em Model categories}, volume~63 of {\em Mathematical Surveys and
  Monographs}.
 American Mathematical Society, Providence, RI, 1999.

\bibitem[Kon94]{Kon}
Maxim Kontsevich.
 Topics in algebra --- deformation theory.
 Lecture Notes, available at
  http://www.math.brown.edu/$\sim$abrmovic/kontsdef.ps, 1994.

\bibitem[KS00]{KS}
Maxim Kontsevich and Yan Soibelman.
 Deformations of algebras over operads and the {D}eligne conjecture.
 In {\em Conf{\'e}rence Mosh{\'e} Flato 1999, Vol. I (Dijon)},
  volume~21 of {\em Math. Phys. Stud.}, pages 255--307. Kluwer Acad. Publ.,
  Dordrecht, 2000.
 arXiv:math/0001151v2 [math.QA].

 
\bibitem[Lur04]{lurie}
J.~Lurie.
{\em Derived Algebraic Geometry}.
PhD thesis, M.I.T., 2004.
http://hdl.handle.net/1721.1/30144.
 
\bibitem[Lur11]{lurieDAG10}
Jacob Lurie.
 Derived algebraic geometry {X}: Formal moduli problems.
 available at www.math.harvard.edu/$\sim$lurie/papers/DAG-X.pdf, 2011.

\bibitem[LV12]{lodayvalletteoperads}
Jean-Louis Loday and Bruno Vallette.
 {\em {Algebraic operads.}}
 Berlin: Springer, 2012.

\bibitem[Man99a]{Man}
Marco Manetti.
 Deformation theory via differential graded {L}ie algebras.
 In {\em Algebraic Geometry Seminars, 1998--1999 (Italian) (Pisa)},
  pages 21--48. Scuola Norm. Sup., Pisa, 1999.
 arXiv math.AG/0507284.

\bibitem[Man99b]{Man2}
Marco Manetti.
 Extended deformation functors.
 {\em Int. Math. Res. Not.}, (14):719--756, 2002.
 arXiv:math/9910071v2 [math.AG].

\bibitem[Pos09]{positselskiDerivedCategories}
Leonid Positselski.
 Two kinds of derived categories, {K}oszul duality, and
  comodule-contramodule correspondence.
 {\em Mem. Amer. Math. Soc.}, 212(996):vi+133, 2011.
 arXiv:0905.2621v12 [math.CT].

\bibitem[Pri07]{ddt1}
J.~P. Pridham.
 Unifying derived deformation theories.
 {\em Adv. Math.}, 224(3):772--826, 2010.
 corrigendum 228 (2011), no. 4, 2554--2556, arXiv:0705.0344v6
  [math.AG].

\bibitem[Pri10]{drep}
J.~P. Pridham.
 Representability of derived stacks.
 {\em J. K-Theory}, 10(2):413--453, 2012.
 arXiv:1011.2742v4 [math.AG].


\bibitem[Pri25]{coddt}
J.~P. Pridham.
 Derived derivations govern contraderived deformations of dg algebras
  over dg (pr)operads.
 arXiv:2503.05317, 2025.

\bibitem[Qui68]{Q}
Daniel Quillen.
 On the (co-) homology of commutative rings.
 In {\em Applications of Categorical Algebra (Proc. Sympos. Pure
  Math., Vol. XVII, New York, 1968)}, pages 65--87. Amer. Math. Soc.,
  Providence, R.I., 1970. 
 
\bibitem[Sch68]{Sch}
Michael Schlessinger.
 Functors of {A}rtin rings.
 {\em Trans. Amer. Math. Soc.}, 130:208--222, 1968.
 
 
\bibitem[Val14]{valletteHtpyThHtpyAlg}
Bruno Vallette.
 Homotopy theory of homotopy algebras.
 {\em Annales de l’Institut Fourier}, 70(2):683–738,
  2020.
 arXiv:1411.5533v3 [math.AT].
 
 
\end{thebibliography}

\end{document}